\newcommand{\harxiv}[1]{\href{http://arxiv.org/abs/#1}{\texttt{arXiv:#1}}}
\newcommand{\hyref}[2]{\hyperref[#2]{#1~\ref*{#2}}}
\newcommand{\hypageref}[1]{\hyperref[#1]{page~\pageref*{#1}}}
\newcommand{\coloneqq}{\mathrel{\mathop:}=}
\def\clap#1{\hbox to 0pt{\hss#1\hss}}
\def\mathrlap{\mathpalette\mathrlapinternal}
\def\mathrlapinternal#1#2{%
\rlap{$\mathsurround=0pt#1{#2}$}}
\theoremstyle{plain}
\newtheorem{theorem}{Theorem}[section]
\newtheorem*{theorem*}{Theorem}
\newtheorem{conjecture}[theorem]{Conjecture}
\newtheorem{lemma}[theorem]{Lemma}
\newtheorem{corollary}[theorem]{Corollary}
\newtheorem{proposition}[theorem]{Proposition}
\theoremstyle{definition}
\newtheorem{remark}[theorem]{Remark}
\newtheorem{example}[theorem]{Example}
\newtheorem*{naive-algorithm}{Na\"ive algorithm}
\newtheorem*{refined-algorithm}{Refined algorithm}
\newtheorem{definition}[theorem]{Definition}
\newtheorem{question}[theorem]{Question}
\newcommand{\Step}[1]{\medskip\noindent\emph{Step #1:}}
\newcommand{\LLambda}{\Lambda(r,n,m)}
\newcommand{\Obj}{\text{Ob}}
\newcommand{\Subs}{\text{Sub}}
\newcommand{\Fact}{\text{Fac}}
\newcommand{\sphc}[1]{\catT_{\!#1}}
\newcommand{\orbit}{\kc_{\!\scalebox{0.6}{\text{ADE}}}}
\newcommand{\yes}{\multicolumn{1}{c}{$\checkmark$}}
\newcommand{\yeb}[1]{\multicolumn{1}{c}{$\phantom{{}_{\!\!#1}} \checkmark_{\!\!#1}$}}
\newcommand{\no} {\multicolumn{1}{c}{$\times$}}
\newcommand{\na} {\multicolumn{1}{c}{$-$}}
\newcommand{\mcc}[1]{\multicolumn{1}{c}{\scalebox{0.9}{#1}}}
\newcommand{\IFq}{\IF_{\!q}}
\newcommand{\dimv}[1]{\underline{\dim}(#1)}
\newcommand{\length}[1]{\text{length}(#1)}
\newcommand{\catT}{\mathsf T}   % for a category called T
\newcommand{\Db}{\sD^b}
\newcommand{\Kb}{\sK^b}
\newcommand{\Kbminus}{\sK^{b,-}}
\newcommand{\begintabularhammock}{ \smallskip\noindent\hspace{0.05\textwidth}
                                   \begin{tabular}{@{} p{0.15\textwidth} @{} p{0.80\textwidth} @{}} }
\newcommand{\IQ}{{\mathbb{Q}}}
\newcommand{\IF}{{\mathbb{F}}}
\newcommand{\IZ}{{\mathbb{Z}}}
\newcommand{\IN}{{\mathbb{N}}}
\newcommand{\kc}{{\mathcal C}}
\newcommand{\supp}{\text{supp}}
\newcommand{\kk}{{\mathbf{k}}}
\DeclareMathOperator{\coker}{\mathsf{coker}}
\DeclareMathOperator{\kernel}{\mathsf{ker}}
\DeclareMathOperator{\image}{\mathsf{im}}
\newcommand{\ind}[1]{\mathsf{ind}(#1)}
\renewcommand{\mod}[1]{\mathsf{mod}(#1)}
\newcommand{\Mod}[1]{\mathsf{Mod}(#1)}
\newcommand{\stmod}[1]{\underline{\mathsf{mod}}(#1)}
\newcommand{\proj}[1]{\mathsf{proj}(#1)}
\newcommand{\Proj}[1]{\mathsf{Proj}(#1)}
\DeclareMathOperator{\add}{\mathsf{add}}
\newcommand{\thick}[2]{\mathsf{thick}_{#1}(#2)}
\DeclareMathOperator{\Hom}{\mathrm{Hom}}
\newcommand{\floor}[1]{\lfloor #1\rfloor}
\newcommand{\sC}{\mathsf{C}}
\newcommand{\sD}{\mathsf{D}}
\newcommand{\sH}{\mathsf{H}}
\newcommand{\sK}{\mathsf{K}}
\newcommand{\sX}{\mathsf{X}}
\newcommand{\sY}{\mathsf{Y}}
\newcommand{\sZ}{\mathsf{Z}}
\DeclareMathAlphabet{\mathpzc}{OT1}{pzc}{m}{it}
\newcommand{\into}{\hookrightarrow}
\newcommand{\onto}{\twoheadrightarrow}
\newcommand{\arrd}{ \ar@{-}[r] \ar@{=}[d] }
\newcommand{\too}{\longrightarrow}
\renewcommand{\iff}{~\Longleftrightarrow~}
\newcommand{\tri}[3]{#1\rightarrow #2\rightarrow #3\rightarrow \Sigma #1}
\renewcommand{\phi}{\varphi}
\renewcommand{\epsilon}{\varepsilon}
\newcommand{\arr}{\ar@{~}[r]}
\newcommand{\arrr}{\ar@{~}[rr]}
\newcommand{\arudd}{\ar[ur] \ar@{.}[dr]}
\newcommand{\aruu}{\ar@{.>}[uuurrr]}
\newcommand{\arurr}{\ar@{.>}[ur] \ar@{.}[rr]}
\newcommand{\bib}[6]{{\bibitem{#2} #3: {\emph{#4},} #5#6.}}
\begin{document}

\title{Discrete triangulated categories}

\author{Nathan Broomhead}
\author{David Pauksztello}
\author{David Ploog}

\begin{abstract}
We introduce and study several homological notions which generalise the discrete derived categories of D. Vossieck. As an application, we show that Vossieck discrete algebras have this property with respect to all bounded t-structures. We give many examples of triangulated categories regarding these notions.
\end{abstract}

\begingroup\renewcommand\thefootnote{}%
\footnote{MSC 2010: 18E30, 16G10, 16E35}
% 16E20: Associative algebras: Grothendieck groups
% 16E35: Associative algebras: Homological methods: Derived categories
% 16G10: Associative algebras: Representations theory of rings and algebras: Representations of Artinian rings
% 16G20: Associative algebras: Representations theory of rings and algebras: Representations of quivers
% 18E30: Category theory: Abelian category: Derived categories, triangulated categories
% 18F30: Category theory: Abelian category: Grothendieck groups
%
\addtocounter{footnote}{-2}\endgroup

\maketitle

\vspace{-2ex}

{\small
\setcounter{tocdepth}{1}
\tableofcontents
}

\vspace{-4ex}

\addtocontents{toc}{\protect{\setcounter{tocdepth}{-1}}}  % No toc entry for Introduction

\section*{Introduction}
\addtocontents{toc}{\protect{\setcounter{tocdepth}{1}}}   % but enable toc entries for other sections

\noindent
In this article, we investigate Hom-finite triangulated categories which are, in various senses, small. Our motivating examples are bounded derived categories of derived-discrete algebras, which were introduced and classified by D. Vossieck \cite{Vossieck}. In previous work \cite{BPP}, we observed some special properties of these categories: the dimension of Hom spaces between indecomposables is bounded (by 1 or 2, depending on the algebra), and all hearts of bounded t-structures have only finitely many indecomposable objects.

We set out to introduce and compare abstract notions which apply to such examples. The three most relevant for this article are
\begin{itemize}
\item \emph{cone finite}: any two objects admit only finitely many cones, up to isomorphism;
\item \emph{hom bounded}: universal bound on Hom dimension among indecomposable objects;
\item \emph{countable}: the category has only countably many objects, up to isomorphism.
\end{itemize}
We establish the following relations between these properties in \hyref{Theorem}{thm:general-properties}:
\begin{theorem*}
\begin{enumerate}[label=\textnormal{(\roman*)}]
\item A hom $1$-bounded triangulated category is cone finite.
\item A cone finite triangulated category with a classical generator is countable.
\end{enumerate}
\end{theorem*}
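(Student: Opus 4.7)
For part~(i), my plan is to use Krull--Schmidt (which holds under the implicit Hom-finiteness and idempotent-completeness of the setting) to decompose $X = \bigoplus_{i=1}^s X_i$ and $Y = \bigoplus_{j=1}^t Y_j$ into indecomposables. The hom 1-bounded hypothesis then forces each $\Hom(X_i, Y_j)$ to be at most one-dimensional, so choosing generators $\alpha_{ji}$ one encodes any $f\colon X \to Y$ by a scalar matrix $(\lambda_{ji})$. Since two morphisms in the same $\Aut(X) \times \Aut(Y)$-orbit have isomorphic cones, it would suffice to control this orbit space up to equality of cones. The automorphism groups act by scaling, by $GL$-type row and column operations within blocks of repeated isomorphic summands, and by off-diagonal adjustments coming from (at most one-dimensional) morphisms between distinct indecomposables; using these one reduces the scalar matrix to a canonical form parametrised by discrete rank data, of which only finitely many configurations can occur.

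The main obstacle for part~(i) is the interaction between blocks when indecomposables appear with multiplicity and off-diagonal morphisms exist: the orbit space can genuinely be infinite (as in bipartite quiver representations), and it is the triangulated structure that must force distinct orbits to yield the same cone. As a cleaner fallback, I would induct on $s+t$. The base case $s = t = 1$ is immediate: by hom 1-bounded there are at most two cones, namely $Y_1 \oplus \Sigma X_1$ for the zero morphism and $\cone{\alpha}$ for the essentially unique nonzero morphism when one exists. For the inductive step with $Y = Y' \oplus Y''$ nontrivially split, the octahedral axiom applied to $X \xrightarrow{f} Y' \oplus Y'' \xrightarrow{\pi_{Y''}} Y''$ yields a triangle $\cone{f} \to \cone{f''} \to \Sigma Y' \to \Sigma\cone{f}$, so that $\cone{f}$ is determined by a morphism $\cone{f''} \to \Sigma Y'$. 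Combined with the inductive hypothesis for $f''$ (smaller~$t$) and for cones of morphisms into $\Sigma Y'$, this should yield finiteness, though some care is needed because $\cone{f''}$ is a new object whose decomposition is not controlled \emph{a~priori}.

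For part~(ii), the plan is a direct layered construction. Let $G$ be a classical generator, so that $\catT$ is the smallest thick subcategory containing $G$. Set $\cC_0 \coloneqq \add\bigl(\bigoplus_{n \in \IZ} \Sigma^n G\bigr)$; this is countable because Krull--Schmidt ensures each object decomposes as a finite sum of the finitely many indecomposable summands of the shifts $\Sigma^n G$. Inductively, define $\cC_{k+1}$ by adjoining to $\cC_k$ all shifts, finite direct sums, direct summands, and cones of morphisms between $\cC_k$-objects. Each of these operations preserves countability: shifts and sums go countable-to-countable, summands remain countable by Krull--Schmidt, and the crucial point is that cone finiteness guarantees only finitely many cones for each of the countably many pairs $(X, Y) \in \cC_k \times \cC_k$. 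Hence every $\cC_k$ is countable, and the ascending union $\bigcup_k \cC_k$ is a countable thick subcategory of $\catT$ containing $G$, which by hypothesis is all of $\catT$.
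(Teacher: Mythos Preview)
Your argument for part~(ii) is correct and is essentially the paper's own proof: build the category in countably many layers from the generator, noting that each operation (shifts, finite sums, summands via Krull--Schmidt, and cones via cone finiteness) preserves countability.

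For part~(i), the gap you flag at the end is genuine and your induction on $s+t$ does not close as stated. After the octahedral step you must control cones of $\cone{f''} \to \Sigma Y'$; but $\cone{f''}$ may have more indecomposable summands than $X$ did, so the pair $(\cone{f''},\Sigma Y')$ need not have smaller $s'+t'$, and the inductive hypothesis does not apply. Bounding the number of summands of a cone is exactly the sort of statement you are trying to prove, so there is no cheap rescue inside this scheme.

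The paper's remedy is to first establish the case of an \emph{indecomposable target} ($t=1$) for arbitrary source, directly and without induction. Given $(a_1,\ldots,a_n)^t \colon A_1\oplus\cdots\oplus A_n \to B$ with all $A_i$ and $B$ indecomposable, any $a_i=0$ splits off a $\Sigma A_i$ summand from the cone; once all $a_i\neq 0$, hom 1-boundedness forces any other such morphism to be $(\lambda_1 a_1,\ldots,\lambda_n a_n)^t$ with all $\lambda_i\neq 0$, and the diagonal automorphism $\mathrm{diag}(\lambda_1,\ldots,\lambda_n)$ of the source exhibits an isomorphism of cones. Thus morphisms into a single indecomposable have only finitely many cones, regardless of how large the source is. One then inducts on $t$ alone, splitting off one indecomposable $B_1$ from the target via the octahedral axiom: induction gives finitely many cones $C'$ for the map into $B_2\oplus\cdots\oplus B_m$, and the already-proved $t=1$ case handles $C'\to\Sigma B_1$ even though $C'$ may be large. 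Your first plan (orbit analysis via diagonal scaling) was in fact exactly the right idea for this $t=1$ step; it is only the interaction between distinct target summands that is intractable by pure orbit methods, and the induction on $t$ sidesteps that.
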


\noindent 
We give examples showing that countability doesn't imply cone finite or hom bounded, and that cone finite doesn't imply hom bounded.
Vossieck's definition of discreteness does not generalise to abstract triangulated categories, as it invokes cohomology objects, i.e.\ a t-structure. Therefore, we study pairs $(\sD,\sH)$ of a triangulated category together with the heart of a bounded t-structure. Moreover, the Grothendieck group $K_0(\sD)=K_0(\sH)$ enters, generalising dimension vectors of modules. We introduce two notions characterising different aspects of smallness for abelian categories via their K-groups which we call \emph{modular} and \emph{abelian discrete} (see \hyref{Section}{sec:Vossieck}), and we prove in \hyref{Theorem}{thm:discrete-characterisations}:
\begin{theorem*}
Let $(\sD,\sH)$ be a triangulated category with the heart of a bounded t-structure.
\begin{enumerate}[label=\textnormal{(\roman*)}]
\item $\sD$ cone finite and $\sH$ abelian discrete $\implies$ $\sD$ is discrete with respect to $\sH$.
\item $\sD$ is discrete with respect to $\sH$ $\implies$ $\sH$ abelian discrete.
\item $\sD$ is discrete with respect to $\sH$ and $\sH$ modular $\implies$ $\sD$ cone finite.
\end{enumerate}
\end{theorem*}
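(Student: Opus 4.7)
The three implications are handled separately. For part (ii), given an indecomposable $M \in \sH$, I would regard $M$ as a stalk complex in $\sD$, obtaining an indecomposable object with cohomology concentrated in a single degree having class $[M]$. If $\sH$ were not abelian discrete, there would exist a class $v \in K_0(\sH)$ for which $\sH$ contains infinitely many non-isomorphic indecomposables of class $v$; these stalk complexes would then furnish infinitely many non-isomorphic indecomposables of $\sD$ with identical cohomology dimension vectors, contradicting discreteness of $\sD$ with respect to $\sH$.

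For part (i), fix a cohomology dimension vector $(v_i)_{i\in\IZ}$ with finite support, and consider $X \in \sD$ with $[H^i(X)] = v_i$. Boundedness of the heart ensures each such $X$ has bounded cohomology, and abelian discreteness yields finitely many possibilities for each $H^i(X)$ up to isomorphism. I would then reconstruct $X$ inductively from its truncations via the triangles
\[
\tau^{\leq i-1}X \to \tau^{\leq i}X \to H^i(X)[-i] \to (\tau^{\leq i-1}X)[1],
\]
so that $\tau^{\leq i}X$ arises as a cone of a morphism between previously-determined objects. Cone finiteness furnishes finitely many outcomes at each step, and iterating over the finite cohomological range produces finitely many $X$ in total.

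For part (iii), fix $X,Y \in \sD$ and consider cones $Z$ of morphisms $X \to Y$. The long exact sequence of cohomology decomposes into short exact sequences
\[
0 \to C_i \to H^i(Z) \to K_i \to 0
\]
in $\sH$, where $C_i$ is a quotient of $H^i(Y)$ and $K_i$ is a subobject of $H^{i+1}(X)$. Modularity of $\sH$, interpreted as finiteness of the poset of subclasses below a given class in $K_0(\sH)$, then yields finitely many possibilities for $[C_i]$ and $[K_i]$, and hence for the cohomology dimension vector of $Z$. Discreteness of $\sD$ with respect to $\sH$ then bounds the number of isomorphism classes of $Z$. The main obstacle lies here: one must confirm that the working definition of modularity is indeed calibrated to deliver finitely many classes of subobjects and quotients as above, and then verify that the resulting cohomological constraint is tight enough to invoke discreteness. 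Part (i) is technically more routine by comparison, essentially a bootstrap of cone finiteness over a finite cohomological range.
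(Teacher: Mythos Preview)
Your proposal is correct and follows essentially the same route as the paper. One minor wrinkle in part (ii): both the paper's definition of \emph{abelian discrete} and of \emph{$\sH$-discrete} quantify over all objects, not just indecomposables. Your inference ``if $\sH$ is not abelian discrete then some class contains infinitely many non-isomorphic \emph{indecomposables}'' is not automatic (it would need, e.g., a length bound to control the number of summands), and it is also unnecessary: simply drop the word ``indecomposable'' throughout and the argument goes through verbatim, exactly as in the paper. For part (iii), the paper isolates your short exact sequence analysis as a separate lemma: modularity of $\sH$ implies that for any exact sequence $H' \to H \to H''$ the class $[H]$ is finitely determined by $[H']$ and $[H'']$, which is precisely the statement you extract from the pieces $C_i$ and $K_i$.
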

As one application of this result, we show that derived-discrete algebras are discrete with respect to any bounded t-structure, not just the standard one; see \hyref{Proposition}{prop:DDC-heart-independence}.

In \hyref{Section}{sec:co-discrete}, we introduce and study an analogous definition of discreteness with respect to a bounded co-t-structure with co-heart $\sC$, assuming the existence of a silting object.

There are many interesting examples of triangulated categories having certain of the properties in question. Here, we list some of them. For a more elaborate version, with further properties and example classes, see the table on \hypageref{tab:examples}.
Below, $\Gamma_{\!\text{ADE}}$, $A_\infty$ and $\tilde A_1$ denote any ADE quiver, the one-sided infinite quiver of type $A$ with zigzag orientation, and the Kronecker quiver, respectively. The column then refers to the bounded derived category of the path algebra.
DDC stands for the bounded derived category of a derived-discrete algebra $\LLambda$ with $r < n$, i.e.\ of finite global dimension.
Finally, $\sphc{w}$ is the triangulated category generated by a $w$-spherical object.
\begin{center}
\begin{tabular}{ >{\small} l <{\normalsize} *{8}{p{2.25em}} } \toprule
     & & & & & & & \\[-2.7ex]
     & \mcc{$\kk\Gamma_{\!\scalebox{0.6}{\text{ADE}}}$} & \mcc{$\kk A_\infty$} & \mcc{DDC}
     & \mcc{$\sphc{1}$} & \mcc{$\sphc{>1}$} & \mcc{$\sphc{<1}$}
     & \mcc{$\IQ\tilde A_1$} & \mcc{$\IFq\tilde A_1$} \\ \midrule
%
% examples:            ADE  A_infty   DDC   T_1    T_2    T_0    Q      IF_q
%                                                              
$\sH$-discrete    & \yes & \yes & \yes & \yes & \yes & \na  & \no  & \yes \\
$\sC$-discrete & \yes & \yes & \yes & \na  & \na  & \yes & \no  & \yes \\
hom bounded          & \yes & \yes & \yes & \no  & \yes & \yes & \no  & \no  \\
cone finite          & \yes & \yes & \yes & \yes & \yes & \yes & \no  & \yes \\
countable objects    & \yes & \yes & \yes & \yes & \yes & \yes & \yes & \yes \\
finite hearts        & \yes & \no  & \yes & \no  & \yes & \na  & \no  & \no  \\
\bottomrule
\end{tabular}
\end{center}
\medskip
We mention other approaches for capturing the smallness of categories and algebras:

Small Krull--Gabriel (KG) dimensions of (the abelianisations of) triangulated categories correspond to ``small'' categories. For example, by work of G. Bobi\'nski and H. Krause \cite{BK}, the KG dimension of (perfect categories) of Dynkin quivers is 0, and that of derived-discrete algebras is 1 if the algebra has infinite global dimension and 2 otherwise. However, there are non-derived-discrete algebras whose perfect categories have Krull--Gabriel dimension $2$; for the example of the Kronecker quiver, see \cite[Proposition~1.8]{Kr}.

Another abstract concept for triangulated categories is that of a generic object, and its absence, generic triviality.
In \cite{Han}, Z. Han shows that generic triviality of a compactly generated triangulated category is equivalent to local finiteness of the compact subcategory, and to the compact subcategory having KG dimension 0. Thus it seems unlikely that these notions are useful in the study of the smallness notions investigated here.

In \cite{Adachi-Mizuno-Yang,PSZ} the authors study discreteness of triangulated categories related to finiteness of intervals of t-structures and silting objects. They apply this to contractibility of spaces of stability conditions.

In \cite{Han-Zhang}, Y. Han and C. Zhang characterise derived-discrete algebras as the finite-dimensional algebras of finite global cohomological length. Their approach depends on cohomology and modules, i.e.\ does not apply to abstract triangulated categories.

\subsection*{Acknowledgments:}
It is a pleasure to thank
Martin Kalck,                   % discussion, suggested 'countable' as a good notion
Henning Krause,                 % feedback on preprint
Greg Stevenson                  % feedback on preprint
for their input and suggestions,
as well as an anonymous referee.
%
% Manchester University and grants
Moreover, we thank Mike Prest and The University of Manchester for their hospitality. We are grateful to the London Mathematical Society for financial support via their `Research in Pairs' Scheme 4 grant, no.\ 41434.            % for grant to visit in Manchester
The second named author was supported by
EPSRC grant no.\ EP/K022490/1.   % required on DavidA's papers.

\section{Properties of triangulated categories: cone finite, hom bounded}
\noindent
We define a number of properties that suitable $\kk$-linear triangulated categories can enjoy, all of which capture certain aspects of `smallness'. Throughout, we assume that the class of objects of any category forms a set. Moreover, we apply the following abuse of terminology: whenever we speak of a `set' of objects defined by some property, we mean the class of such objects, up to isomorphism.

Fix a field $\kk$. A $\kk$-linear category is called \emph{Hom-finite} if all homomorphism spaces are finite-dimensional over $\kk$. An additive category is called \emph{Krull--Schmidt} if each object has a decomposition into a finite direct sum of indecomposable objects. The decomposition is unique up to isomorphism and re-ordering of the summands.
Examples are Hom-finite abelian categories and their bounded derived categories.

Throughout this note we shall write $\hom(A,B) = \dim \Hom(A,B)$.

\begin{definition} \label{def:triangulated-notions}
Let $\sD$ be a Hom-finite, Krull--Schmidt $\kk$-linear triangulated category.
\begin{enumerate}[label=(\arabic*)]
\item $\sD$ is called \emph{cone finite} if for any two objects $D_1,D_2\in\sD$, the set of cones of morphisms $D_1 \to D_2$,
      i.e.\ the set $\{C\in\sD \mid \exists \ D_1 \to D_2 \to C \to \Sigma D_1 \}$, is finite.
\item $\sD$ is called \emph{hom $b$-bounded} for some $b\in\IN$ if $\hom(D_1,D_2)\leq b$ for any indecomposable objects $D_1,D_2\in\ind{\sD}$. The minimal such $b$ is called the \emph{hom bound} of $\sD$, and $\sD$ is called \emph{hom bounded} if it is hom $b$-bounded for some $b\in\IN$.
\item $\sD$ is called \emph{countable}, if the set of all objects up to isomorphism is countable. Equivalently, $\ind{\sD}$ is a countable set, as $\sD$ is Krull--Schmidt and Hom-finite.
\end{enumerate}
\end{definition}

All of these definitions could be stated in greater generality: cone finiteness makes sense for all triangulated categories (no field needed); hom boundedness applies to arbitrary $\kk$-linear categories (no triangulated structure required); countability of objects applies to arbitrary categories. The latter notion is crude, and depends strongly on the cardinality of the field $\kk$; see \hyref{Remark}{rem:fields}. We will not explore these properties beyond the setting of Hom-finite triangulated categories.

In this article, we also study the relationship with Vossieck's notion of discreteness, see \hyref{Section}{sec:Vossieck}, and we introduce and investigate its co-t-structure counterpart in \hyref{Section}{sec:co-discrete}.
For now, we only deal with the above three notions: they have the advantage of applying in a general setting, i.e.\ without additional data. Also note each condition (countable objects, hom bounded, cone finite) is automatically passed on to
triangulated subcategories.

\begin{theorem} \label{thm:general-properties}
\begin{enumerate}[label=(\roman*)]
\item A hom 1-bounded triangulated category is cone finite.
\item A cone finite triangulated category with a classical generator is countable.
\end{enumerate}
\end{theorem}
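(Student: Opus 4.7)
The plan for part (i) is to induct on the number of indecomposable summands of $D_1$. The base case is $D_1 = U$ indecomposable, with arbitrary $D_2 = \bigoplus_j V_j^{b_j}$. Since the category is hom 1-bounded, $\End(U) = \kk$ and $\Hom(U, V_j)$ is either $0$ or $\kk$, so $\Hom(U, D_2) \cong \bigoplus_{j \in J} \kk^{b_j}$ where $J = \{j : \Hom(U, V_j) \neq 0\}$. Since $\End(V_j) = \kk$, the subgroup $\prod_{j \in J} GL_{b_j}(\kk)$ sits inside $\Aut(D_2)$ and acts with exactly two orbits on each $\kk^{b_j}$ (zero and nonzero), giving at most $2^{|J|}$ orbits. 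Because elements of a common $\Aut(D_1) \times \Aut(D_2)$-orbit produce isomorphic cones (by TR3 together with the triangulated five-lemma), the number of iso classes of cones is bounded by $2^{|J|}$.

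For the inductive step, write $D_1 = D_1' \oplus U$ with $U$ indecomposable, and fix a morphism $f : D_1 \to D_2$. Applying the octahedral axiom to the composition $U \hookrightarrow D_1' \oplus U \xrightarrow{f} D_2$ produces a triangle $D_1' \xrightarrow{\tilde f} W \to \cone(f) \to \Sigma D_1'$, where $W = \cone(f|_U)$ and $\tilde f = q \circ f|_{D_1'}$ for the canonical $q : D_2 \to W$. Thus $\cone(f) \cong \cone(\tilde f)$. The base case produces only finitely many iso classes of $W$ as $f|_U$ varies; and for each such $W$, the inductive hypothesis applied to the pair $(D_1', W)$ yields finitely many cones of maps $D_1' \to W$. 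Combining, $\cone(f)$ takes finitely many iso classes as $f$ ranges over $\Hom(D_1, D_2)$.

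For part (ii) I would run a diagonal exhaustion. Set $\sT_0 = \{\Sigma^n G : n \in \IZ\} \cup \{0\}$, a countable set, and inductively define $\sT_{k+1}$ to be $\sT_k$ together with all shifts, direct summands, finite direct sums, and cones of morphisms between pairs of objects of $\sT_k$. Hom-finiteness and Krull--Schmidt ensure each object has only finitely many direct summands, while cone finiteness bounds the cones per pair by a finite number, so $\sT_{k+1}$ remains countable if $\sT_k$ is. The union $\bigcup_k \sT_k$ is countable, is closed under shifts, cones and summands, and contains $G$, so it equals $\thick(G) = \sD$. The main obstacle is genuinely located in part (i): a naive attempt to count $\Aut(D_1) \times \Aut(D_2)$-orbits on $\Hom(D_1, D_2)$ directly can fail, since the orbit count can be infinite even under hom 1-boundedness (Kronecker-like support patterns between isotypic blocks produce continuous invariants such as cross-ratios). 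The octahedral reduction circumvents this by always reducing to the case where one of the factors is indecomposable.
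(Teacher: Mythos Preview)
Your proof is correct and follows essentially the same strategy as the paper: an octahedral induction for (i) and a step-by-step exhaustion for (ii). The only difference in (i) is a harmless duality: the paper takes the \emph{target} indecomposable in the base case and then inducts on the number of summands of the target, whereas you take the \emph{source} indecomposable and induct on the source; your orbit count under $\prod_j GL_{b_j}(\kk)$ is the dual of the paper's diagonal-scaling argument. Your closing remark about cross-ratios correctly identifies why a global orbit count on $\Hom(D_1,D_2)$ cannot replace the inductive reduction, which is a point the paper leaves implicit.
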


\begin{proof}
(i)
Suppose that $\sD$ is hom bounded with bound 1, i.e.\ $\hom(A,B) \leq 1$ for all $A,B \in \ind{\sD}$. In particular, this implies that nonzero morphisms $A \to B$ with $A,B\in\ind{\sD}$ have isomorphic cones. Consider a morphism of the form
\[
    A_1 \oplus \cdots \oplus A_n \xrightarrow{(a_1,\ldots,a_n)^t} B
\]
where $A_1, \dots, A_n$ and $B$ are indecomposable. 
If one $a_i=0$, then a standard application of the octahedral axiom shows that the cone splits up as follows:
\[
    A_1 \oplus \cdots \oplus A_n \xrightarrow{(a_1,\ldots,a_n)^t} B \too C((a_1,\dots, a_{i-1}, a_{i+1}, \dots a_n)^t) \oplus \Sigma A_i .
%    A_1 \oplus A_2 \xrightarrow{(a_1 ~ 0)^t} B \too C(a_1) \oplus \Sigma A_2 .
\]
Therefore, we can assume that all $a_i\neq0$.
Any other such morphism $A_1\oplus\cdots\oplus A_n$ is of the form $(\lambda_1 a_1,\ldots,\lambda_n a_n)^t$ for scalars $\lambda_1,\ldots,\lambda_n\in\kk$ and hence induces a commutative diagram of distingished triangles
\[ \xymatrix{
  A_1 \oplus \cdots \oplus A_n \ar[rr]^-{(\lambda_1 a_1, \ldots, \lambda_n a_n)^t}
                               \ar[d]_-{\text{diag}(\lambda_1,\ldots,\lambda_n)}
& & B  \ar[r] \ar@{=}[d]
& C    \ar[r]  \ar@{-->}[d]
& \Sigma (A_1 \oplus \cdots \oplus A_n) \ar[d]
\\
  A_1 \oplus \cdots \oplus A_n \ar[rr]_-{(a_1, \ldots, a_n)^t}  & & B \ar[r]   & C' \ar[r]
& \Sigma (A_1 \oplus \cdots \oplus A_n) .
} \]
As above, we may assume that all $\lambda_i \neq 0$. But then both vertical morphisms in the left hand square are isomorphisms, hence the dashed arrow is also an isomorphism. It follows that there are finitely many  possible cones $C$ for morphisms $A_1 \oplus \cdots \oplus A_n \to B$.

Now consider the cone of an arbitrary morphism $\bigoplus_{i=1}^n A_i \to \bigoplus_{j=1}^m B_j$. We proceed by induction on $m$. For $m=1$, we are done above, so assume $m > 1$. We have the following diagram coming from the octahedral axiom:
\[
\xymatrix{
                                                            & B_1 \ar@{=}[r]  \ar[d]                     & B_1 \ar[d] \\
\bigoplus_{i=1}^n A_i \ar[r] \ar@{=}[d] & \bigoplus_{j=1}^m B_j \ar[r] \ar[d] & C \ar[d] \\
\bigoplus_{i=1}^n A_i \ar[r]                  & \bigoplus_{j=2}^m B_j \ar[r]           & C'.
}
\]
By induction, there are finitely many possible $C'$. By the case for $m=1$ above, for each $C'$ there are finitely many possible cones of $C' \to \Sigma B_1$, and in particular, finitely many possibilities for $C$. Hence $\sD$ is cone finite.

(ii)
Recall that a \emph{classical generator} of a triangulated category $\sD$ is an object $G$ such that every object is obtained from $G$ in finitely many steps by taking shifts, cones and summands. (There does not have to be a bound on the number of steps; if such bounds exist, their minimum is the Rouquier dimension of $\sD$.) Bounded derived categories of finite-dimensional algebras and projective varieties have classical generators.

In each step, there are finitely many possibilities for summands by Krull--Schmidt; countably many possibilities for sums; and finitely many possibilities for cones by the  assumption. Therefore, with $G$ generating $\sD$ in countably many steps, the cardinality of objects of $\sD$ is countable as well.
\end{proof}

\begin{remark} \label{rem:fields}
\begin{enumerate}
\item The assumption of a classical generator in the theorem cannot be dropped: if $\sD$ is any cone finite triangulated category, and $I$ some uncountable set, then $\bigoplus_I\sD$ is still cone finite but uncountable.
\item The same proof shows a bit more: if $\sD$ has a classical generator and for all $A,B\in\sD$, there are only countably many cones of morphisms $A\to B$, then $\sD$ is countable. In particular, this applies to any Hom-finite $\sD$ if $\kk$ is a countable field.
\item We remark that if $\kk$ is a finite field, then $\sD$ Hom-finite trivially implies cone finite. Hence, for fields of arbitrary cardinality, hom boundedness captures `smallness' best among the notions of \hyref{Definition}{def:triangulated-notions}. See the table in \hyref{Section}{sec:examples}.
\end{enumerate}
\end{remark}

\begin{conjecture}
Hom bounded triangulated categories are cone finite.
\end{conjecture}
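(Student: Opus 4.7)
My plan is to mirror the octahedral induction used in the proof of Theorem~\ref{thm:general-properties}(i). The reduction there --- splitting off summands via the octahedral axiom and inducting on the number of indecomposable direct summands of source and target --- uses only Krull--Schmidt and cone finiteness of morphisms between indecomposables. Accordingly, it suffices to establish the following claim: for any two indecomposables $A, B$ in a hom $b$-bounded triangulated category $\sD$, the set of cones of morphisms $A \to B$ is finite.

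The isomorphism class of $\cone{f}$ depends only on the $\Aut(A) \times \Aut(B)$-orbit of $f \in \Hom(A,B)$ under the action $(\alpha, \beta) \cdot f = \beta \circ f \circ \alpha^{-1}$, since any two morphisms in the same orbit fit into a commutative square of distinguished triangles whose outer vertical arrows are isomorphisms, forcing an isomorphism on cones (this is exactly the diagram used in the proof of part (i)). Here $\Hom(A,B)$ is finite-dimensional of dimension at most $b$, and by Krull--Schmidt the acting groups are the unit groups of the local finite-dimensional $\kk$-algebras $\End(A)$ and $\End(B)$. For finite $\kk$, orbit finiteness is automatic since $\Hom(A,B)$ is itself finite (cf.\ Remark~\ref{rem:fields}(3)), and we are done.

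The interesting case is when $\kk$ is infinite. Writing $\End(A) = \kk \cdot \id_A \oplus J_A$ with $J_A$ the nilpotent Jacobson radical (similarly for $B$), the unit group decomposes as an extension of $\kk^*$ by a unipotent algebraic group. I would attempt to show orbit finiteness by introducing a discrete invariant on $\Hom(A,B)$ that refines the orbit class: for each indecomposable test object $T \in \ind{\sD}$, composition with $f$ yields a $\kk$-linear map $\Hom(T, A) \to \Hom(T, B)$ between spaces of dimension at most $b$, whose rank is one of at most $b+1$ values and is constant on orbits. The long exact sequences obtained by applying $\Hom(T, -)$ to the triangle $A \xrightarrow{f} B \to \cone{f} \to \Sigma A$ show that this cohomological data controls $\hom(T, \cone{f})$ for every $T$.

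The main obstacle is showing that this discrete invariant, or some further natural refinement of it, actually determines $\cone{f}$ up to isomorphism. Without such a determination result, orbit finiteness is not automatic: unipotent algebraic group actions on finite-dimensional vector spaces can in general have continuous moduli of orbits. A natural alternative strengthening would be a generic-cone statement: on some Zariski-dense subset of $\Hom(A,B)$, all morphisms share a common cone, reducing the problem to a lower-dimensional complement and permitting induction on $\dim \Hom(A,B) \leq b$. Formulating and proving either a cohomological determination theorem or a genericity statement in the purely categorical setting of an abstract triangulated category --- without recourse to moduli-theoretic machinery --- is where the main difficulty lies, and is what I expect makes this genuinely a conjecture rather than a theorem.
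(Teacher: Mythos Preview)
The paper does not prove this statement: it is stated as a \emph{Conjecture} with no proof, immediately following \hyref{Theorem}{thm:general-properties} and \hyref{Remark}{rem:fields}. There is therefore no paper proof to compare against.

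Your proposal is not a proof but an honest analysis of the difficulty, and as such it is accurate. The reduction to morphisms between indecomposables via the octahedral induction of \hyref{Theorem}{thm:general-properties}(i) is correct and is the natural first step; the observation that $\cone{f}$ depends only on the $\Aut(A)\times\Aut(B)$-orbit of $f$ is correct; and the finite-field case is indeed trivial (as the paper itself notes in \hyref{Remark}{rem:fields}(3)). Your identification of the obstruction --- that for infinite $\kk$ one must control orbits of a unipotent-by-torus action on a vector space of dimension at most $b$, and that neither a cohomological determination result nor a genericity-of-cones statement is available in an abstract triangulated category --- is a fair articulation of why the paper leaves this open. You have correctly diagnosed that this is a conjecture rather than a theorem, which matches the paper exactly.
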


\section{Discreteness with respect to t-structures} \label{sec:Vossieck}
\noindent
In this section we consider an abstracted version of Vossieck's \cite{Vossieck} original definition of derived-discrete algebras. Examining the derived categories of these algebras in \cite{BPP} was our motivation to introduce the categorical notions in this article.

A \emph{torsion pair} in a triangulated category $\sD$ consists of full subcategories $(\sX,\sY)$, each of which is closed under taking direct summands, such that $\Hom(\sX,\sY) = 0$ and 
\[ \sD = \sX * \sY \coloneqq \{ D \in \sD \mid \exists \, \tri{X}{D}{Y} \text{ with } X \in \sX, Y \in \sY\}.\] 
A torsion pair $(\sX,\sY)$ is \emph{bounded}\footnote{%
  Be aware that this a different use of the  word `bounded' than the one from
  \hyref{Definition}{def:triangulated-notions}(2), the property `hom bounded'
  of triangulated categories. No confusion should arise.}
if $\bigcup_{n\in \IZ} \Sigma^n \sX = \sD = \bigcup_{n \in \IZ} \Sigma^n \sY$.

A \emph{t-structure} is a torsion pair $(\sX,\sY)$ with $\Sigma \sX \subseteq \sX$.
Any t-structure induces an abelian category, its \emph{heart} $\sH \coloneqq \sX \cap \Sigma \sY$.
To any t-structure in a triangulated category, there are associated cohomology functors, denoted $H^i\colon\sD\to\sH$ for $i\in\IZ$.

Bounded t-structures can be reconstructed from their hearts. In this note, we only deal with bounded t-structures, and we will simply say `bounded heart' to mean a full abelian subcategory which is the heart of a bounded t-structure. 
Background on t-structures can be found in, for example, \cite[\S 10.1]{KS}.

\begin{definition}
Let $\sD$ be a Hom-finite, Krull--Schmidt $\kk$-linear triangulated category admitting bounded t-structures.
\begin{enumerate}[label=(\arabic*)]
\item Let $\sH$ be the heart of a bounded t-structure. Then $\sD$ is said to be \emph{discrete with respect to $\sH$}, or \emph{$\sH$-discrete},
      if for every group valued function $v\colon\IZ\to K_0(\sD)$, the set of objects
       $\{D\in\sD \mid [H^i(D)] = v(i) \in K_0(\sD) ~\forall i\in\IZ\}$
      is finite.
\item $\sD$ is said to have \emph{finite hearts} if the heart of any bounded t-structure in $\sD$ has only finitely many
      indecomposable objects.
\end{enumerate}
\end{definition}

In (1), $v$ is not assumed to be a group homomorphism.

There is a well-known, canonical isomorphism of Grothendieck groups, $K_0(\sH) = K_0(\sD)$, induced by the inclusion $\sH\into\sD$. The inverse is given by sending the class of a complex to the alternating sum of the classes of its cohomologies with respect to $\sH$.
Moreover, given a finite-dimensional $\kk$-algebra $\Lambda$ having $N$ (non-isomorphic) simple modules, there is a further canonical isomorphism $K_0(\Lambda) \coloneqq K_0(\mod{\Lambda}) = \IZ^N$, mapping the class of a module to its dimension vector.

Let us establish the link between the above definition of discreteness and Vossieck's original notion: in \cite{Vossieck}, he exclusively considers categories of the form $\sD = \Db(\Lambda)$ for finite-dimensional $\kk$-algebras over an algebraically closed field $\kk$. He calls the derived category $\Db(\Lambda)$ of the algebra $\Lambda$ \emph{discrete} if for any sequence $v\colon\IZ\to K_0(\Lambda)$ with only finitely many nonzero terms, the set of isomorphism classes of indecomposable complexes $A\in\Db(\Lambda)$ with dimension vector $\dimv{A} \coloneqq (\dimv{H^i(A)})_{i\in\IZ} = v$ is finite. Using the standard isomorphisms $K_0(\Db(\Lambda)) = K_0(\Lambda) = \IZ^N$, it is clear that $\Db(\Lambda)$ is discrete in Vossieck's sense if and only if $\Db(\Lambda)$ is discrete with respect to $\mod{\Lambda}$ in the above sense.

The finite hearts property came up in our previous work on spaces of stability conditions of derived-discrete algebras \cite{BPP2}.

Because we have to work with hearts in triangulated categories, we now also introduce some notions that capture `smallness' of abelian categories. Let $\sH$ be a Hom-finite, $\kk$-linear abelian category, then $\sH$ is Krull--Schmidt \cite{Atiyah}. We denote $\pi\colon \Obj(\sH) \to K_0(\sH)$. Recall that we identify objects up to isomorphism. For an object $H\in\sH$, we denote by $\Subs(H)$ the set of subobjects $H'\into H$, and by $\Fact(H)$ the set of factors $H\onto H'$.

\begin{itemize}
\item $\sH$ is a \emph{length category} if it is  artinian and noetherian.
\item $\sH$ is \emph{finite} if the set $\ind{\sH}$ is finite.
\item $\sH$ is \emph{(abelian) discrete} if $\pi$ has finite fibres,
      i.e.\ for any $c\in K_0(\sH)$, the set of objects
      $\pi^{-1}(c) = \{ A \in \sH \mid [A] = c \}$ is finite.
\item $\sH$ is \emph{modular} if $\pi(\Subs(H))$ is a finite
      set for all $H\in\sH$, i.e.\
      $\{ [H'] \mid \exists H' \into H \} \subseteq K_0(\sH)$ is finite.
\end{itemize}

Of these, length and modular are mild restrictions. For example, they hold for $\mod{\Lambda}$ with $\Lambda$ a finite-dimensional algebra. The other two conditions (discrete and finite) are severe restrictions.

\begin{remark} \label{rem:unique-modules}
% It is *not* true that for a representation-finite algebra, all its indecomposable
% modules are either preprojective or preinjective is false.
% The family $\Lambda(1,n,0)$ gives a counterexample, each of the simple objects is
% in a tau-orbit containing no indecomposable projective or injective module.
% This is responsible for the failure of the stronger property in these examples.
%
All hearts for derived-discrete algebras are representation-finite module categories by \cite[\S 7.1]{BPP}. Since, by an exercise in string combinatorics, the derived-discrete algebras not derived equivalent to $\Lambda(1,n,m)$ are also representation-directed (see \cite[Ch.~IX]{ASS}), they therefore satisfy a stronger property than abelian discrete: the class of an indecomposable module determines the module uniquely.
\end{remark}

We will justify the terminology `modular' below in
\hyref{Remark}{rem:dimension_vectors}.
For now, just observe that the condition is equivalent to the finiteness of $\pi(\Fact(H))$, since $[H'']=[H]-[H']$ for any short exact sequence $0\to H'\to H\to H''\to0$.

\begin{lemma} \label{lem:dimension-vectors}
Let $H', H''$ be objects of a modular abelian category $\sH$. Then the $K_0$-classes of objects $H$ with exact sequences $H'\to H\to H''$ are finitely determined by the classes $[H']$ and $[H'']$, i.e.\ the set
 $\{ [H] \mid \exists \, H' \to H \to H'' \text{ exact} \} \subseteq K_0(\sH)$ is finite.
\end{lemma}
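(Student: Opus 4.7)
My plan is to decompose any exact sequence $H' \xrightarrow{f} H \xrightarrow{g} H''$ into two standard short exact sequences and express $[H]$ in terms of classes coming from $\pi(\Fact(H'))$ and $\pi(\Subs(H''))$, both finite by modularity.

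Concretely, let $I = \image(f) \subseteq H$ and $J = \image(g) \subseteq H''$. The exactness at $H$ says $I = \kernel(g)$, so we obtain a short exact sequence
\[ 0 \to I \to H \to J \to 0, \]
which in $K_0(\sH)$ gives $[H] = [I] + [J]$. Now $I$ is a factor of $H'$ (via the epi $H' \onto I$), so $[I] \in \pi(\Fact(H'))$, while $J$ is a subobject of $H''$, so $[J] \in \pi(\Subs(H''))$.

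By the modularity hypothesis, $\pi(\Subs(H''))$ is a finite subset of $K_0(\sH)$; and, as the excerpt observes immediately before the lemma, modularity equivalently implies $\pi(\Fact(H'))$ is finite. Hence $[H] = [I] + [J]$ ranges over a set of size at most $|\pi(\Fact(H'))| \cdot |\pi(\Subs(H''))| < \infty$, proving the lemma.

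I do not anticipate any real obstacle: the only subtlety is remembering that "exact" means exact at $H$ (not that $f$ is mono or $g$ is epi), so one must work with the images on both sides rather than with $H'$ and $H''$ themselves, and then invoke the equivalent formulation of modularity in terms of factors to control $[I]$.
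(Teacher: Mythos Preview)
Your proof is correct and is essentially identical to the paper's own argument: both use the short exact sequence $0 \to \image(f) \to H \to \image(g) \to 0$ coming from exactness at $H$ to write $[H] = [\image(f)] + [\image(g)]$, then invoke modularity (in its factor form for $H'$ and its subobject form for $H''$) to bound the possibilities. The only cosmetic difference is that the paper explicitly writes out the auxiliary short exact sequences $0 \to \kernel(f) \to H' \to \image(f) \to 0$ and $0 \to \image(g) \to H'' \to \coker(g) \to 0$ to justify those inclusions.
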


\begin{proof}
The exact sequence $H' \xrightarrow{f} H \xrightarrow{g} H''$ leads to short exact sequences
\[ 0 \to \kernel(f) \to H' \to \image(f) \to 0 \qquad\text{and}\qquad 0 \to \image(g) \to H'' \to \coker(g) \to 0 . \]
As $\sH$ is modular, $[\image(f)]$ is finitely determined by $[H']$, and $[\image(g)]$ is finitely determined by $[H'']$.
Moreover, 
 $[H] = [\kernel(g)] + [\image(g)] = [\image(f)] + [\image(g)]$,
from $0 \to \kernel(g) \to H \to \image(g) \to 0$.
Thus, $[H]$ is finitely determined by $[H']$ and $[H'']$, as claimed.
\end{proof}

\begin{remark} \label{rem:dimension_vectors}
The property of the lemma captures the positivity of dimension vectors for modules over a finite-dimensional algebra $\Lambda$. If $\Lambda$ has $N$ simple modules, then $K_0(\Lambda)=\IZ^N$, and the class of a module $M$ is encoded in its dimension vector $\dimv{M}\in\IN^N$.

All submodules have smaller dimension vectors, hence $\sH\coloneqq\mod{\Lambda}$ is a modular abelian category. The lemma generalises the inequality $\dimv{M}\leq\dimv{M'}+\dimv{M''}$ for an exact sequence $M'\to M\to M''$.

Moreover, over an algebraically closed field $\kk$ the equivalence
\[
\mod{\Lambda} \text{ discrete} \iff \mod{\Lambda} \text{ finite, i.e.\ $\Lambda$ has finite representation type }
\]
holds by the validity of the second Brauer--Thrall conjecture; see for example \cite[Ch.~IV.5]{ASS} and the references therein.
In general, these notions are not equivalent. For example, tubes are Hom-finite hereditary abelian categories that are discrete but not finite.
\end{remark}

\begin{theorem} \label{thm:discrete-characterisations}
Let $(\sD,\sH)$ be a triangulated category together with the heart of a bounded t-structure. Then
\begin{enumerate}[label=\textnormal{(\roman*)}]
\item $\sD$ cone finite and $\sH$ abelian discrete $\implies$ $\sD$ is discrete with respect to $\sH$.
\item $\sD$ is discrete with respect to $\sH$ $\implies$ $\sH$ abelian discrete.
\item $\sD$ is discrete with respect to $\sH$ and $\sH$ modular $\implies$ $\sD$ cone finite.
\end{enumerate}
\end{theorem}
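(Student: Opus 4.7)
The plan is to handle the three implications in order of increasing difficulty. Part (ii) is essentially a tautology: given a class $c \in K_0(\sH) \subseteq K_0(\sD)$, I would consider the function $v : \IZ \to K_0(\sD)$ supported in degree $0$ with value $c$. Any $H \in \sH$ with $[H] = c$ satisfies $[H^i(H)] = v(i)$ for all $i$, since $H^0(H) = H$ and $H^i(H) = 0$ otherwise. Hence the finite set guaranteed by $\sH$-discreteness of $\sD$ contains $\{H \in \sH \mid [H] = c\}$, which must therefore be finite.

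For (i), my strategy is to reconstruct an object $D$ from its cohomology by inducting on the length of the support of $v$. Boundedness of the t-structure forces $H^i(D) = 0$ for $|i| \gg 0$, so we may restrict to functions $v$ whose support lies in some finite interval $[a,b]$. The base case $a = b$ is immediate, since then $D \cong H^a(D)[-a]$ and there are only finitely many $H^a(D)$ by abelian discreteness of $\sH$. For the inductive step, I rotate the canonical truncation triangle
\[\tau^{\leq b-1} D \too D \too H^b(D)[-b] \too \Sigma \tau^{\leq b-1} D\]
to realise $D$ as the cone of a morphism $H^b(D)[-b-1] \to \tau^{\leq b-1}D$. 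Abelian discreteness bounds the source to finitely many possibilities; the inductive hypothesis applied to $v|_{[a,b-1]}$ bounds the target; and cone finiteness of $\sD$ then yields only finitely many $D$.

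The substantive content lies in (iii). Here I would fix $D_1, D_2 \in \sD$ and analyse the cone $C$ of an arbitrary morphism $f : D_1 \to D_2$. The long exact cohomology sequence in $\sH$ contains, for each $i$, the three-term exact piece
\[H^i(D_2) \too H^i(C) \too H^{i+1}(D_1).\]
Applying \hyref{Lemma}{lem:dimension-vectors} to the modular heart $\sH$ with $[H^i(D_2)]$ and $[H^{i+1}(D_1)]$ fixed, the class $[H^i(C)]$ ranges over a finite subset of $K_0(\sH)$. Boundedness of $D_1$ and $D_2$ confines the contributing $i$ to a finite range, so the cohomological profile $v(i) = [H^i(C)]$ takes only finitely many values as $f$ varies. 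For each such $v$, $\sH$-discreteness of $\sD$ yields finitely many possibilities for $C$, and summing gives cone finiteness.

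I expect (iii) to be the main obstacle: without modularity there is no way to pin down $[H^i(C)]$ from $[H^i(D_2)]$ and $[H^{i+1}(D_1)]$, and the strategy collapses. The first two implications by contrast are essentially bookkeeping with the definitions and the truncation triangles of the t-structure.
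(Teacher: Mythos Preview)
Your proposal is correct and matches the paper's proof essentially line for line: part (ii) is the same specialisation to a function supported in degree~$0$; part (iii) is the same long exact sequence argument invoking \hyref{Lemma}{lem:dimension-vectors}; and part (i) is the same induction on the cohomological amplitude via truncation triangles and cone finiteness. The only cosmetic difference is that in (i) you peel off the top cohomology $H^b(D)[-b]$ whereas the paper peels off the bottom one, and you phrase the inductive step as ``$D$ is a cone'' while the paper writes $\sD_v \subseteq \sD_{v'} * \sD_{v''}$ --- these are the same statement.
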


\begin{corollary}
Let $(\sD,\sH)$ be a triangulated category with a modular heart of a bounded t-structure. Then
\[ \sD \text{ is discrete with respect to } \sH \iff \sD \text{ cone finite and } \sH \text{ abelian discrete} . \]
\end{corollary}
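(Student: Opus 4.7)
My plan is to treat the three implications separately, in each case working through the cohomological filtration provided by the bounded t-structure and using the identification $K_0(\sD) = K_0(\sH)$. Implication (ii) is essentially a definition chase: for any $c \in K_0(\sH)$ I define $v \colon \IZ \to K_0(\sD)$ by $v(0) = c$ and $v(i) = 0$ for $i \neq 0$, and observe that each $H \in \sH$ satisfies $H^0(H) = H$ with all other cohomology zero. Hence $\{H \in \sH \mid [H] = c\}$ embeds in the finite set of $D \in \sD$ realising $v$, so it is finite.

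For (i), I fix $v \colon \IZ \to K_0(\sD)$. By boundedness of the t-structure, if $v$ has infinite support the target set is empty; otherwise $v$ is supported on some interval $[a,b]$. For each $i$, abelian discreteness of $\sH$ supplies only finitely many candidates for $H^i(D)$ of class $v(i)$. I then rebuild $D$ from its cohomology along its truncation tower: start with $\tau^{\leq a} D \cong \Sigma^{-a} H^a(D)$, and at each subsequent step exploit the standard triangle
\[ \tau^{\leq n-1} D \to \tau^{\leq n} D \to \Sigma^{-n} H^n(D) \to \Sigma \tau^{\leq n-1} D \]
to realise $\tau^{\leq n} D$ as the cone of a morphism $\Sigma^{-n-1} H^n(D) \to \tau^{\leq n-1} D$. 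Inductively both endpoints lie in finite sets, so cone finiteness of $\sD$ confines $\tau^{\leq n} D$ to a finite set, and after $b - a$ steps $D = \tau^{\leq b} D$ is one of only finitely many objects.

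For (iii), given $D_1, D_2 \in \sD$ and any triangle $\tri{D_1}{D_2}{C}$, the long exact cohomology sequence yields, for each $i$, an exact sequence $H^i(D_2) \to H^i(C) \to H^{i+1}(D_1)$ in $\sH$. Modularity of $\sH$ together with \hyref{Lemma}{lem:dimension-vectors} forces $[H^i(C)]$ to take only finitely many values, determined by the fixed classes $[H^i(D_2)]$ and $[H^{i+1}(D_1)]$. Since $D_1$ and $D_2$ have cohomology supported on a finite interval, so does $C$, hence the function $i \mapsto [H^i(C)]$ ranges over only finitely many possibilities overall; applying $\sH$-discreteness to each of these then yields finitely many isomorphism classes of cones $C$. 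I expect the genuinely technical point to be the induction in (i), where at each level one must verify that the truncation is presented as a cone between objects already confined to finite sets; once this is pinned down, cone finiteness propagates the count along the tower, and the remaining arguments are a clean assembly of the long exact sequence, \hyref{Lemma}{lem:dimension-vectors}, and the defining finiteness properties.
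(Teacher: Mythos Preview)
Your proof is correct and follows essentially the same route as the paper: part (ii) is the identical definition chase, part (iii) is the same long exact sequence argument via \hyref{Lemma}{lem:dimension-vectors}, and part (i) is the same induction on the cohomological length of $v$ using truncation triangles and cone finiteness (you peel off one cohomology at a time building up $\tau^{\leq n}D$, while the paper peels off the bottom cohomology and applies induction to the rest, but this is only a cosmetic difference). The only point worth flagging is that both you and the paper implicitly treat ``$[H^i(D)]=0$'' as forcing the cohomology to vanish outside $\supp(v)$; strictly speaking abelian discreteness only gives finitely many objects of class $0$, so one should either note this or observe that the intended examples satisfy $[H]=0\Rightarrow H=0$.
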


\begin{proof}
For an object $D\in\sD$, we define the function $v_D\colon \IZ\to K_0(\sD)$ by $v_D(i) \coloneqq [H^i(D)]$.

For a function $v\colon \IZ\to K_0(\sD)$, we define:
\begin{itemize}
\item $\sD_v \coloneqq \{ A \in \sD \mid [H^i(A)] = v(i) ~\forall i\in\IZ \}$, a full subcategory;
\item $\supp(v) \coloneqq \{ i \in \IZ \mid v(i) \neq 0 \}$, the support of $v$;
\item $\length{v} \coloneqq \max\limits_{i\in\IZ}\{v(i)\neq0\} - \min\limits_{i\in\IZ}\{v(i)\neq0\}$, the length of $v$.
\end{itemize}

\noindent
(i)
Given $v$, we do induction on the length of $v$. If $\length{v}=0$, then we can assume that $v(0)\neq0$, by suspending if necessary. Then all objects of $\sD_v$ have a single cohomology in degree 0, hence are in the heart $\sH$. Thus,
 $\sD_v = \{ A \in \sH \mid [A] = v(0) \}$,
and this set is finite by our assumption that $\sH$ is abelian discrete.

Now let $\length{v}=n>0$. Again, without loss of generality, we can assume that $\supp(v) \subseteq \{0,\ldots,n\}$. Define
$v', v'' \colon \IZ\to K_0(\sD)$ by $v''(i) = v(i)$ for $1\leq i\leq n$ and zero otherwise; and $v'(0) = v(0)$ and zero otherwise.
By induction, the subcategories $\sD_{v'}$ and $\sD_{v''}$ are finite. Now for any object $A\in\sD_v$, the truncation triangle for $A$ with respect to $H^0$ has the form
 $A' \to A \to A'' \to \Sigma A'$
with $A'\in\sD_{v'}$ and $A''\in\sD_{v''}$.
Hence, $\sD_v \subseteq \sD_{v'} * \sD_{v''}$. However, as $\sD$ is cone finite, there are only finitely many cones out of the finitely many objects from the two subcategories. Hence, $\sD_v$ is also finite.

\medskip \noindent
(ii)
This is immediate: given $c\in K_0(\sH)$, define $v\colon\IZ\to K_0(\sD)=K_0(\sH)$ by $v(0) \coloneqq c$ and $v(i)=0$ for $i\neq0$. Since $\sD$ is discrete with respect to $\sH$, the set of objects $D\in\sD$ with $[H^i(D)] = v(i)$ for all $i\in\IZ$ is finite. By construction, $H^i(D)=0$ for all $i\neq0$, i.e.\ $D\in\sH$, and $[D]=v(0)=c\in K_0(\sH)$. Hence $\sH$ is abelian discrete.

\medskip \noindent
(iii)
For $A,B\in\sD$, we want to show that there are only finitely many cones $A\xrightarrow{f} B\to C_f$, where $f\in\Hom(A,B)$ is arbitrary. Any such triangle gives rise to a long exact cohomology sequence in $\sH$
\[ \cdots \to H^i(A) \to H^i(B) \to H^i(C_f) \to H^{i+1}(A) \to H^{i+1}(B) \to \cdots . \]
By \hyref{Lemma}{lem:dimension-vectors}, $[H^i(C_f)]\in K_0(\sH)$ is determined up to finite ambiguity by $[H^i(B)]$ and $[H^{i+1}(A)]$. As the long exact sequence is finite (the t-structure is bounded), we see that all $[H^i(C_f)]$ are determined by $A$ and $B$, up to finite ambiguity (even more, they are determined by the functions $v_A,v_B$, but we do not need this). Hence, for fixed $A$ and $B$, there are only finitely many possibilities for $v_{C_f}$. Finally, since $\sD$ is $\sH$-discrete it follows that for each such choice of $v_{C_f}$, there are only finitely many objects $C_f$ realising this function. Altogether, the number of cones of morphisms $A\to B$ is finite.
\end{proof}

We expect the following statements to hold in general. In the next section, we show that they do hold for derived-discrete algebras. Note that a triangulated category $\sD$ can be discrete with respect to a bounded heart $\sH$ which is hom unbounded; see the tube category $\sphc{1}$ in the table on \hypageref{tab:examples}. This example also yields a bounded heart with infinitely many indecomposable objects.

\begin{conjecture} \label{conj:Vossieck}
Let $\sD$ be a Hom-finite Krull--Schmidt triangulated category and $\sH$ the heart of a bounded t-structure.
\begin{enumerate}[label=\textnormal{(\roman*)}]
\item If $\sD$ is $\sH$-discrete, then all bounded hearts in $\sD$ are discrete.
\item If $\sD$ is $\sH$-discrete, then so is $(\sD,\sH')$ for any bounded heart $\sH'$.
\item If $\sD$ is $\sH$-discrete and $\sH$ is finite, then all bounded hearts are finite.
\item $\sD$ is $\sH$-discrete $\iff$ $\sD$ is cone finite.
\end{enumerate}
\end{conjecture}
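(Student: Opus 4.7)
My overall plan is to reduce the four parts to part~(iv), which is the most fundamental: once (iv) is known, part~(ii) is immediate, since $\sH$-discreteness then coincides with the heart-free condition of cone finiteness and is therefore independent of the choice of bounded heart, and part~(i) follows from (ii) by applying \hyref{Theorem}{thm:discrete-characterisations}(ii) to an arbitrary bounded heart. Part~(iii), which concerns finiteness of all hearts, is of a somewhat different flavour and I would attack it separately.

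For~(iv), \hyref{Theorem}{thm:discrete-characterisations} splits the task into two sub-claims: (a) that $\sH$-discreteness of $\sD$ implies $\sH$ is modular, and (b) that cone finiteness of $\sD$ implies that every bounded heart is abelian discrete. For~(a), the natural approach is to first show that a bounded heart in an $\sH$-discrete triangulated category is a length category; then $K_0(\sH)$ is free on the simples and Jordan--H\"older forces the classes of subobjects of any $H$ to be bounded componentwise, giving modularity. The key input is that a strictly infinite chain of subobjects of a fixed $H$ would produce infinitely many pairwise non-isomorphic objects of $\sH$ whose $K_0$-classes are constrained to a bounded region, forcing class repetitions and contradicting the abelian discreteness guaranteed by \hyref{Theorem}{thm:discrete-characterisations}(ii).

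For~(b), the starting point is that cone finiteness together with a classical generator yields countability of $\sD$ by \hyref{Theorem}{thm:general-properties}(ii), so at worst $\sH$ has countably many objects. To upgrade countability to abelian discreteness, I would exhibit objects $H \in \sH$ with $[H] = c$ as cones in $\sD$ of morphisms between `building blocks' whose isomorphism classes depend only on $c$; the natural candidates are truncations of (shifts and sums of) the classical generator, so that each $H$ arises as $H^0$ of a cone of a restricted morphism type, with cone finiteness bounding the total count. I expect this step to be the main obstacle, because without projective covers or a general resolution machinery there is no canonical way to write a heart object as such a cone, so some detour through silting or approximation theory is likely to be needed.

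For~(iii), once~(ii) is established every bounded heart is $\sH'$-discrete. I would then use HRS-tilting: if $\sH$ is finite, it has only finitely many simples and hence only finitely many simple tilts, each of which modifies $\sH$ by replacing one simple by its shift---an operation preserving finiteness of the set of indecomposables. Iterating through the tilt-connected component, finiteness propagates to every bounded heart accessible from $\sH$ by simple tilts. The nontrivial step is connectedness of the tilt-graph on bounded t-structures, which is known for derived-discrete algebras and several other classes; failing this, one would need a purely $\sD$-intrinsic characterisation of finite hearts, for example via simple-minded collections of bounded cardinality.
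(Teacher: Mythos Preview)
The statement you are attempting to prove is labelled a \emph{Conjecture} in the paper, not a theorem: the paper does \emph{not} prove it in general. What the paper does is verify all four parts in the special case $\sD=\Db(\LLambda)$, using hard input specific to derived-discrete algebras, namely the hom bound and the finiteness of all bounded hearts from \hyref{Proposition}{prop:BPP-results}. With those facts in hand, cone finiteness follows from \hyref{Theorem}{thm:discrete-characterisations}(iii) applied to the standard (modular) heart, and then (i)--(iv) drop out via \hyref{Theorem}{thm:discrete-characterisations}(i). There is no abstract argument in the paper to compare your proposal against.

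Your proposal therefore attempts more than the paper does, and it has genuine gaps. In step~(a) you argue that an infinite strictly descending chain $H\supset H_1\supset H_2\supset\cdots$ forces infinitely many non-isomorphic objects with $K_0$-classes ``constrained to a bounded region'', contradicting abelian discreteness. But why are the classes $[H_i]$ bounded? Without a positivity or length function on $K_0(\sH)$ there is no a priori bound on $\{[H']\mid H'\hookrightarrow H\}$; that bound is precisely the content of modularity, which is what you are trying to establish. The argument is circular as written. (Note also that there exist abelian discrete hearts which are not length categories in any obvious finite-rank sense, e.g.\ the heart of $\sphc{1}$ has infinitely many indecomposables.)

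Step~(b) you yourself flag as the main obstacle, and rightly so: cone finiteness gives countability (assuming a classical generator, which is an extra hypothesis not present in the conjecture), but there is no known mechanism to promote countability to abelian discreteness of an arbitrary bounded heart. Your suggestion to realise each $H$ with $[H]=c$ as a cone between objects depending only on $c$ would need something like functorial resolutions or approximations in the heart, which are not available abstractly. Similarly, your approach to~(iii) via connectedness of the simple-tilt graph invokes a property that is itself open for general $\sD$; it is known for derived-discrete algebras, but that is exactly the case the paper already handles by direct means.
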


\section{Derived-discrete algebras $\LLambda$} \label{sec:DDA}

\noindent
Recall that in \cite{Vossieck}, a finite-dimensional algebra was defined to have a \emph{discrete derived category} if $\Db(\Lambda)$ is discrete with respect to $\mod{\Lambda}$ in our sense, i.e.\ with respect to the standard heart. Following standard usage, we call such an algebra \emph{derived-discrete}.

By the classification of G. Bobi\'nski, C. Gei\ss\ and A. Skowro\'nski \cite{BGS}, such an algebra is derived equivalent to either a representation-finite hereditary algebra or to the path algebra $\LLambda$ given by a cycle of length $n$ to which a linearly oriented $A_m$-chain is attached; bound by $r$ consecutive zero relations in the cycle, ending at the trivalent vertex. Here, $m\geq0, 1\leq r \leq n$. In the following, we assume that $r<n$, which is equivalent to $\LLambda$ having finite global dimension.

\medskip
\begin{center}
\includegraphics[width=0.45\textwidth]{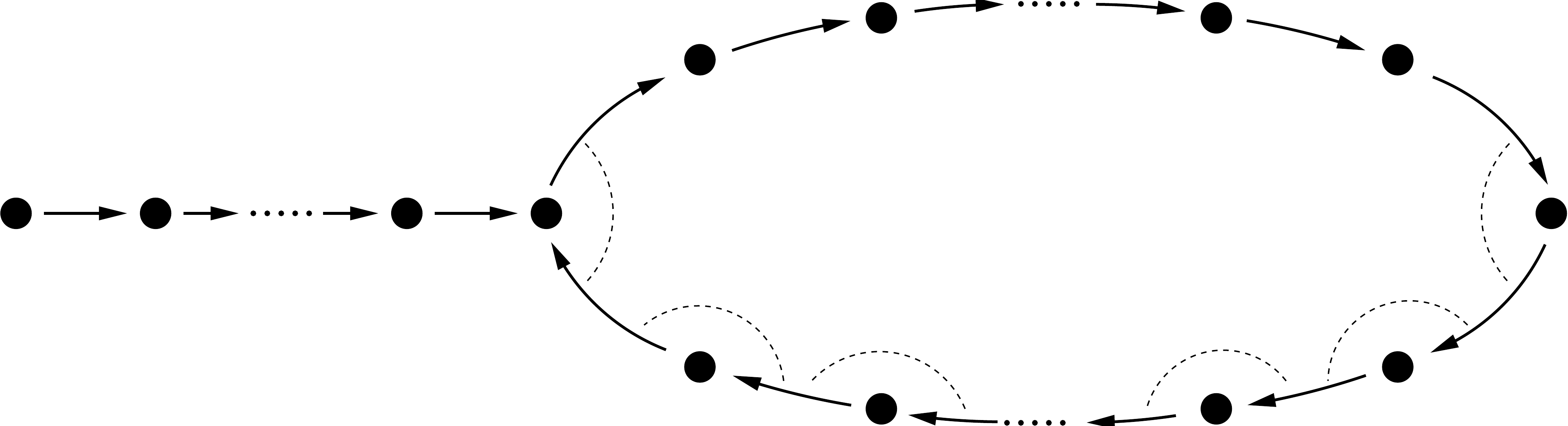}
\end{center}
\medskip

We mention two basic facts about these categories from our previous work:

\begin{proposition}[{\cite[Theorem~5.1 and Proposition~6.1]{BPP}}] \label{prop:BPP-results} $ $\newline
Let $\sD=\Db(\LLambda)$ be the bounded derived category of a derived-discrete algebra. Then
\begin{enumerate}[label=\textnormal{(\roman*)}]
\item If $r=1$, then $\sD$ is hom 2-bounded. If $r>1$, then $\sD$ is hom 1-bounded.
\item Hearts of bounded t-structures are finite.
\end{enumerate}
\end{proposition}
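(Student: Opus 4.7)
The plan is to work with explicit catalogues of indecomposable objects, of t-structures, and of morphism spaces in $\Db(\LLambda)$. Since $\LLambda$ is a gentle algebra, the Bekkert--Merklen classification identifies the indecomposable objects of $\sD = \Db(\LLambda)$ with \emph{string complexes} $P_\sigma$ indexed by certain strings $\sigma$ in the quiver $Q$. For derived-discrete algebras the admissible strings split into two families: finite walks in $Q$ that respect the $r$ consecutive zero relations, and infinite ``rotating'' strings winding around the oriented cycle. The latter organise into $r$ Auslander--Reiten components of shape $\IZ A_\infty$, indexed by a residue class modulo $r$. Setting up this bookkeeping is the starting point of the whole argument.

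For part (i) I would compute $\hom(P_\sigma,P_\tau)$ via the Arnesen--Laking--Pauksztello formula for gentle algebras, which expresses this dimension as the number of \emph{graph maps} plus the number of \emph{double maps} between $\sigma$ and $\tau$; both counts can be read off purely from overlap data of the two strings. A direct case analysis on the finite and rotating families then shows that between any two admissible strings there is at most one graph map and at most one double map, and moreover that the existence of a double map forces an endpoint configuration available only when the relations have length $r=1$. This delivers the bounds: $\hom \le 2$ when $r=1$ and $\hom \le 1$ when $r>1$, with explicit pairs (a map factoring through a relation of length one, in the first case) showing the bounds are sharp.

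For part (ii), the plan is to parametrise bounded hearts by combinatorial data and to use the derived-equivalence classification. Concretely, I would invoke the Koenig--Yang correspondence: length bounded hearts in $\sD$ correspond bijectively to silting objects in $\sD$, and each such silting object has exactly $\rk{K_0(\sD)} = n+m$ indecomposable summands. Hence every bounded heart has exactly $n+m$ simples. To conclude that the heart is finite as an additive category, I would show that the endomorphism algebra $B$ of the corresponding silting object is again derived-discrete. This would follow from the Bobi\'nski--Gei\ss--Skowro\'nski classification, since $B$ is derived equivalent to $\LLambda$ and therefore lies in the same derived-equivalence class of $\LLambda(r',n',m')$; in particular $B$ is representation-finite, so $\sH \simeq \mod{B}$ has only finitely many indecomposable objects. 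A small additional point is that every bounded heart in $\sD$ is automatically a length category, because $\LLambda$ has finite global dimension.

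The main obstacle is the combinatorial enumeration in part (i): one must verify that between the infinite ``rotating'' string complexes, where a priori many candidate maps can be written down, all but the expected graph and double maps vanish modulo the $r$ consecutive zero relations. It is precisely in controlling those cancellations that the dichotomy $r = 1$ versus $r > 1$ emerges, so this combinatorial step is the technical core of the proof; by contrast, part (ii) is essentially a structural reduction to the BGS classification once the Koenig--Yang machinery is invoked.
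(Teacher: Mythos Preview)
The paper does not actually prove this proposition: it is quoted from \cite[Theorem~5.1 and Proposition~6.1]{BPP} and used as a black box, so there is no argument in the present paper to compare your sketch against.

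On the merits of your sketch: for (i), computing $\hom(P_\sigma,P_\tau)$ via the graph-map/double-map basis of \cite{ALP} is a legitimate route and is essentially how \cite[Theorem~7.4]{ALP} handles the $r=n$ case; the original argument in \cite{BPP} predates \cite{ALP} and instead reads off Hom dimensions from explicit Hom-hammocks in the AR components, but the underlying string combinatorics are the same.

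For (ii) there is a genuine gap. You write that ``every bounded heart in $\sD$ is automatically a length category, because $\LLambda$ has finite global dimension''. This implication is false in general: $\kk\tilde A_1$ has global dimension $1$, yet $\coh(\IP^1)$ is a bounded heart in $\Db(\kk\tilde A_1)$ which is not a length category. The Koenig--Yang correspondence you want to invoke only matches silting objects with bounded t-structures whose heart is a \emph{length} category, so without an independent proof that every bounded heart of $\Db(\LLambda)$ is length, you cannot conclude $\sH\simeq\mod{B}$ for some finite-dimensional $B$. Showing this is precisely the substantive content of \cite[Proposition~6.1]{BPP}; it is obtained there from an explicit classification of the bounded t-structures (using the $\IZ A_\infty$ structure of the AR components), and the paper records in \cite[\S7.1]{BPP} that all hearts are module categories of finite representation type. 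Once that is known, the remainder of your argument (BGS classification plus Brauer--Thrall to get representation-finiteness of $B$) does go through; but the step you flagged as ``a small additional point'' is in fact the heart of the matter.
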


We use this together with the results of \hyref{Section}{sec:Vossieck} to show that discreteness for $\LLambda$ is actually independent of the bounded t-structure.

\begin{proposition} \label{prop:DDC-heart-independence}
Let $\Lambda$ be a derived-discrete algebra. Then $\Db(\Lambda)$ is discrete with respect to any bounded heart $\sH$.
\end{proposition}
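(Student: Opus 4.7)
The plan is to bootstrap from Vossieck's original definition, which tells us that $\sD=\Db(\LLambda)$ is discrete with respect to the standard heart $\sH_0 \coloneqq \mod{\LLambda}$, and then use the machinery of \hyref{Theorem}{thm:discrete-characterisations} together with \hyref{Proposition}{prop:BPP-results} to transport this to arbitrary bounded hearts.

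First I would establish that $\sD$ is cone finite. The key observation is that $\sH_0 = \mod{\LLambda}$ is modular: classes in $K_0(\LLambda)\cong\IZ^N$ are dimension vectors, and the dimension vector of a submodule is coordinatewise bounded by that of the ambient module, leaving only finitely many possibilities. Since $\LLambda$ being derived-discrete means exactly that $\sD$ is $\sH_0$-discrete, \hyref{Theorem}{thm:discrete-characterisations}(iii) applied to $(\sD,\sH_0)$ yields that $\sD$ is cone finite. (An alternative, but case-split, route is to note that when $r>1$ the category is hom $1$-bounded by \hyref{Proposition}{prop:BPP-results}(i), hence cone finite by \hyref{Theorem}{thm:general-properties}(i); the argument via modularity handles both $r=1$ and $r>1$ uniformly.)

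Next, I would invoke \hyref{Proposition}{prop:BPP-results}(ii): any bounded heart $\sH$ of $\sD$ has only finitely many indecomposable objects. A finite Krull--Schmidt abelian category is in particular abelian discrete, since for any $c\in K_0(\sH)$ the set $\{A\in\sH\mid [A]=c\}$ is contained in the (finite) set of direct sums of the finitely many indecomposables with total class $c$, which is itself finite.

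Finally, applying \hyref{Theorem}{thm:discrete-characterisations}(i) to the pair $(\sD,\sH)$, with $\sD$ cone finite and $\sH$ abelian discrete, yields that $\sD$ is discrete with respect to $\sH$. There is no real obstacle here; the argument is essentially a two-step chase through \hyref{Theorem}{thm:discrete-characterisations}, with the only substantive inputs being the finiteness of bounded hearts (from \cite{BPP}) and the trivial modularity of $\mod{\LLambda}$.
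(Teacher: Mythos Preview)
Your proof is correct and follows essentially the same route as the paper's: use modularity of $\mod{\Lambda}$ together with $\mod{\Lambda}$-discreteness and \hyref{Theorem}{thm:discrete-characterisations}(iii) to get cone finiteness, then finiteness of all bounded hearts from \hyref{Proposition}{prop:BPP-results}(ii), and conclude via \hyref{Theorem}{thm:discrete-characterisations}(i). Your extra remarks (justifying modularity explicitly, explaining why finite implies abelian discrete, and noting the alternative $r>1$ route via \hyref{Theorem}{thm:general-properties}(i)) are helpful elaborations, but the skeleton is the same as in the paper.
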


\begin{proof}
As $\mod{\Lambda}$ is the module category of a finite-dimensional algebra (in particular modular) and $\Db(\Lambda)$ is $\mod{\Lambda}$-discrete, the triangulated category $\Db(\Lambda)$ is cone finite by \hyref{Theorem}{thm:discrete-characterisations}(iii). Moreover, any bounded heart $\sH$ is finite by \hyref{Proposition}{prop:BPP-results}. Therefore, $\Db(\Lambda)$ is discrete with respect to $\sH$ by \hyref{Theorem}{thm:discrete-characterisations}(i).
\end{proof}

\hyref{Propositions}{prop:BPP-results} and \ref{prop:DDC-heart-independence} show that \hyref{Conjecture}{conj:Vossieck} holds for $\Db(\LLambda)$.
When $r>1$, these categories even enjoy a property slightly stronger than classical Vossieck-discreteness, i.e.\ $\sH$-discreteness with respect to the standard heart: classes of cohomology modules of indecomposable complexes are even unique. 
For this statement, we can relax the finite global dimension assumption and allow $n = r$.

\begin{proposition} \label{prop:Vossieck-unique}
If $\Lambda$ is a derived-discrete algebra with $\Db(\Lambda) \simeq \Db(\LLambda)$ with $n \geq r> 1$, then the indecomposable complexes $D$ in $\Db(\Lambda)$ are uniquely determined by the sequences $[H^i(D)] \in K_0(\sD)$.
\end{proposition}

% This statement holds for any derived-equivalent heart of $\LLambda$, not just the standard heart. However, it is not clear if it's true for a heart coming from a proper silting object.

\begin{proof}
We explain the computation for $\Lambda \coloneqq \LLambda$; the computation for a derived-equivalent presentation of $\Lambda$ is similar.
Since $\Lambda$ is gentle, by \cite{BM} the indecomposable complexes in $\Kbminus(\Lambda)$ are given by \emph{(homotopy) string complexes}; see \cite{Bobinski} for the terminology and \cite[\S 2]{ALP} for an overview. The string complexes for $\Lambda$ are listed in \cite[Lemma 7.1]{ALP}. Now \cite[Theorem 2.8]{CPS} describes the strings of the indecomposable summands of $H^\bullet(D)$ for a string complex $D$, which implies that $H^i(D)$ is either indecomposable or zero. 

Using the orientations of the homotopy strings given in \cite[\S 7]{ALP}, one sees that the cohomology of each string complex over $\Lambda$ has an indecomposable module in its minimal degree, a (possibly trivial) periodic part, corresponding to the cycle, consisting of a repeating unique indecomposable module, and an indecomposable module in its maximal degree. In the infinite global dimension case, the periodic part has zero cohomology. 
In particular, each string complex is uniquely specified by its cohomology.
The result then follows because each indecomposable $\Lambda$-module is uniquely determined by its dimension vector; see \hyref{Remark}{rem:unique-modules} and recall $r>1$.
\end{proof}

%\begin{example}
%The statement of \hyref{Proposition}{prop:Vossieck-unique} does not hold for all $\sH$-discrete categories. For instance, the derived category $\Db(\IP^1_\kk)$ for a finite field $\kk$ is $\sH$-discrete by \hyref{Remark}{rem:fields} but the finitely many skyscraper sheaves have the same class in the Grothendieck group.
%\end{example}

\begin{example}
In addition, discrete derived categories are not `cone unique': there are indecomposable objects $A,B$ of $\Db(\Lambda(1,2,1))$ and nonzero maps $f,g\colon A\to B$ having non-isomorphic cones. The pathology again only occurs in the case $r=1$; obviously such behaviour is impossible when $r>1$, as then Hom spaces between indecomposable complexes are 1-dimensional and lead to unique cones.

Consider $\Lambda(1,2,1)$, i.e.\ the path algebra for the quiver
  \smash{$\xymatrix{-1 \ar[r]|a & 0 \ar@<0.7ex>[r]|b & 1 \ar@<0.5ex>[l]|c}$}
bound by the zero relation $bc$ at the vertex 0. Write $P \coloneqq P(-1)$ and $Q \coloneqq P(0)$ for the projective modules associated to vertices $-1$ and 0, respectively.
\[
\parbox{0.45\textwidth}{
\xymatrix{
A= \ar@<-1ex>[d]_f   & Q          \ar[r]^-{(cb~a)}  \ar[d]_1           & Q \oplus P \ar[d]^{\binom{a}{0}} \\
B=                   & Q          \ar[r]^{cba}      \ar[d]_-{(cb ~ a)} & P          \ar[d]^1 \\
                     & Q \oplus P \ar[r]^-{\binom{a}{0}}               & P
}  }
\parbox{0.1\textwidth}{~~~}
\parbox{0.45\textwidth}{
\xymatrix{
                   & Q \ar[d]_{cb}        \ar[r]^-{(cb~a)}         & Q \oplus P \ar[d]^{\binom{0}{0}} & =A \ar@<1ex>[d]_g \\
                   & Q \ar[d]            \ar[r]^{cba}              & P          \ar[d]             & =B \\
Q \ar[r]^-{(a ~ cb ~ 0)} & P \oplus Q \oplus Q \ar[r]^-{(0 ~ cba ~ 0)^t} & P
}  }
\]
Then the cones are $(Q \to 0) \oplus P \to Q$ and
  $(0 \to Q \to 0) \oplus (P \to P \oplus Q \to Q)$.
Above, all the matrices are transposed because, to match up with string combinatorics for these examples, we read compositions of maps from left to right.
\end{example}

\begin{remark}
We expect that all results of this section are also true for the derived-discrete algebras $\Lambda(n,n,m)$ of infinite global dimension. In fact, the hom bound was established in \cite[Theorem 7.4]{ALP}.
\end{remark}

Our results suggest the following question.

\begin{question}
Are derived-discrete algebras $\Lambda$ characterised, among finite-dimensional algebras, by $\Db(\Lambda)$ having only finite bounded hearts?
\end{question}

\subsubsection*{Relation to compactly generated triangulated categories}
We briefly discuss `big' triangulated categories, i.e.\ assuming the existence of all set-indexed coproducts. Such categories are not Hom-finite, and hence outside the scope of the rest of this article.

Recall that a (compactly generated) triangulated category $\sD$ is called \emph{pure-semisimple} if each object $D \in \sD$ is pure-injective; we refer the reader to \cite[Section 2]{Garkusha-Prest} for the definition of pure-injectivity in the setting of triangulated categories.

Let $\Lambda$ be a finite-dimensional algebra.
By \cite[Theorem 12.20]{Beligiannis}, the category $\sD = \sD(\Mod{\Lambda})$ is pure-semisimple if and only if $\Lambda$ is derived equivalent to a representation-finite hereditary algebra. However, by \cite{ALPP}, each indecomposable object in the homotopy category $\sK(\Proj{\LLambda})$ is pure-injective but $\sK(\Proj{\LLambda})$ is not pure-semisimple; similarly also for $\sD(\Mod{\LLambda})$.
This raises the following question:

\begin{question}
Does the property that each indecomposable object of a big compactly generated triangulated category is pure-injective, characterise discreteness among big triangulated categories?
\end{question}

\section{Discreteness with respect to co-t-structures} \label{sec:co-discrete}
\noindent
A \emph{co-t-structure} in a triangulated category $\sD$ is a torsion pair $(\sX,\sY)$ such that $\Sigma^{-1} \sX \subseteq \sX$. Its \emph{co-heart} is defined to be $\sC \coloneqq \sX \cap \Sigma^{-1} \sY$, which is an additive category that is, in general, not abelian.

Let $(\sD,\sC)$ be a Krull--Schmidt triangulated category with the co-heart $\sC$ of a bounded co-t-structure. 
By \cite[Corollary 5.9]{MSSS}, there is a natural bijection between bounded co-hearts of $\sD$ and silting subcategories $\sC$, i.e.\ $\Hom^{>0}(\sC,\sC))=0$ and $\thick{\sD}{\sC}=\sD$.
If $\sC$ has an additive generator, i.e.\ $\sC = \add C$ for some $C \in \sD$, then $C$ is a \emph{silting object}.

By \cite{MSSS}, see also \cite{AI}, the bounded co-t-structure $(\sX_\sC,\sY_\sC)$ can be recovered from the co-heart $\sC$ using the formulas
\[
\sX_\sC = \bigcup_{k > 0} \Sigma^{-k} \sC * \Sigma^{-k+1} \sC * \cdots * \Sigma^{-1} \sC
\quad \text{and} \quad
\sY_\sC = \bigcup_{k \geq 0} \sC * \Sigma \sC * \cdots * \Sigma^k \sC \, .
\]
For integers $p \leq q$ we set $\sC^{p,q} \coloneqq \Sigma^p \sC * \Sigma^{p+1} \sC * \cdots * \Sigma^{q-1} \sC * \Sigma^q \sC$. 

Let $D \in \sD$. If $D \in \Sigma^m \sC^{p,q}$ for some $m, p, q \in \IZ$ we shall say that $D$ is \emph{$(q-p)$-term with respect to $\sC$}.

The above formulas for $\sX_\sC$ and $\sY_\sC$ boil down to the observation that given a bounded co-t-structure with co-heart $\sC$, each object $0 \neq D \in \sD$ admits a Postnikov tower
\begin{equation} \label{eq:Postnikov}
\xymatrix{
0 = D_0 \ar[r] & D_1 \ar[r] \ar[d]            & D_2 \ar[r] \ar[d]            & \cdots \ar[r] & D_{n-1} \ar[r] \ar[d]  & D_n = D \ar[d] \\
               & \Sigma^{i_1} C_1 \ar@{-->}[ul] & \Sigma^{i_2} C_2 \ar@{-->}[ul] &               & \Sigma^{i_{n-1}} C_{n-1} & \Sigma^{i_n} C_n \ar@{-->}[ul]
}
\end{equation}
with $i_1 < i_2 < \cdots < i_n$ and $C_i \in \sC$; see \cite[Proposition 1.5.6]{Bondarko}.

In analogy with bounded t-structures, there is a canonical isomorphism of groups $K_0^{\mathrm{split}}(\sC) = K_0(\sD)$, induced by the inclusion $\sC \into \sD$; see \cite[Theorem 5.3.1]{Bondarko}. Note that unlike for bounded hearts, we use the \emph{split} Grothendieck group of the co-heart.

\begin{definition}
Let $(\sD,\sC)$ be a triangulated category with a co-heart $\sC$ of a bounded co-t-structure. We call $(\sD, \sC)$ \emph{discrete with respect to $\sC$}, or \emph{$\sC$-discrete}, if for each group valued function $v \colon \IZ \to K_0(\sD)$ the following set of objects is finite:
\[
\{ D \in \sD \mid D \text{ admits a filtration \eqref{eq:Postnikov} such that } \supp(v) =\{i_1, \dots ,i_n \} \text{ and } [C_j] = v(i_j)\} \,.
\]
\end{definition}

In contrast to the implications of \hyref{Theorem}{thm:discrete-characterisations}, the following co-t-structure analogue gives an equivalence. To assess this, think of silting subcategories as having the modularity condition ``built in''; a concrete instance is the correspondence between silting subcategories and algebraic t-structures for finite-dimensional algebras in \cite{KY}.

\begin{theorem} \label{thm:co-discrete}
Let $(\sD,\sC)$ be a triangulated category together with the co-heart $\sC = \add C$ of a bounded co-t-structure. Then
\[
\sD \text{ is cone-finite} \iff \sD \text{ is discrete with respect to $\sC$}.
\]
\end{theorem}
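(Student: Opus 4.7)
My plan is to mirror the strategy of \hyref{Theorem}{thm:discrete-characterisations}, with Postnikov filtrations playing the role that t-structure cohomology plays there. The key simplification from the silting hypothesis is the standard fact \cite{AI} that, since $\sC = \add C$ for a silting object $C$, the indecomposable summands of $C$ form a $\IZ$-basis of $K_0(\sD)$; in particular, an object of $\sC$ is uniquely determined by its class in $K_0(\sD)$ (provided that class has nonnegative coordinates in this basis).

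For the direction ``$\sD$ cone finite $\implies$ $\sD$ is $\sC$-discrete'', I fix $v\colon\IZ\to K_0(\sD)$ and induct on $n=|\supp(v)|$. The case $n=1$ reduces to counting objects $\Sigma^{i_1}C_1$ with $C_1\in\sC$ and $[C_1]=v(i_1)$, of which there is at most one by the basis property. For the inductive step, writing $\supp(v)=\{i_1<\cdots<i_n\}$, any such $D$ fits in a triangle $D_{n-1}\to D\to\Sigma^{i_n}C_n\to\Sigma D_{n-1}$ where $D_{n-1}$ admits a filtration matching $v|_{\{i_1,\ldots,i_{n-1}\}}$ and $[C_n]=v(i_n)$. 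Induction bounds the number of $D_{n-1}$, the base case fixes $C_n$, and cone finiteness gives finitely many cones, hence finitely many $D$.

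For the converse ``$\sD$ is $\sC$-discrete $\implies$ $\sD$ cone finite'', I fix $A,B\in\sD$ and consider all cones $C_f$ of morphisms $f\colon A\to B$. Boundedness of the co-t-structure places $A$, $B$, and hence every such $C_f$, in a common slab $\sC^{p,q}$, so Postnikov filtrations of these cones are supported in the finite range $\{p,\ldots,q\}$. The crux is that for every $f$ one can find a Postnikov filtration of $C_f$ whose class function $v_{C_f}$ satisfies $v_{C_f}(i)\leq v_A(i-1)+v_B(i)$ for all $i$, where the inequality is coordinate-wise in the basis of $K_0(\sD)$ given by indecomposable summands of $C$. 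Since there are only finitely many such bounded class functions supported in a finite range, and each of them yields finitely many $D\in\sD$ by $\sC$-discreteness, only finitely many cones $C_f$ are possible.

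The main obstacle is verifying the coordinate-wise bound. The natural attempt is to apply the octahedral axiom iteratively to the rotated triangle $B\to C_f\to\Sigma A$, interleaving the Postnikov tower of $B$ with that of $\Sigma A$ (whose pieces sit one degree above those of $A$). When pieces of $B$ and $\Sigma A$ coincide in degree $i$, their extension inside $C_f$ lies in $\sC * \sC$ but generally not in $\sC$, and must be re-truncated via the co-t-structure; this re-truncation can shift classes between adjacent degrees and, when $f$ is genuinely nontrivial, can cancel contractible summands --- witness the cone of an isomorphism, which vanishes while the naive count would predict two nonzero Postnikov pieces. Checking that these refinements preserve the coordinate-wise upper bound is cleanest in the chain-complex picture: whenever $\sD$ is identified with $\Kb(\sC)$, the $i$-th term of the mapping cone of $f\colon A^\bullet\to B^\bullet$ is $B^i\oplus A^{i-1}$, and passage to a minimal representative only shrinks terms.
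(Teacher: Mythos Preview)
Your forward direction is correct and matches the paper's: induction on the support of $v$, with the base case handled by the fact that the indecomposable summands of a silting object give a basis of $K_0(\sD)$, so an object of $\sC$ is determined by its class.

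For the converse there is a gap. The only argument you actually complete for the coordinate-wise bound $v_{C_f}(i)\leq v_A(i-1)+v_B(i)$ is the mapping-cone computation in $\Kb(\sC)$, but the identification $\sD\simeq\Kb(\sC)$ is not available under the stated hypotheses: it needs an enhancement (e.g.\ $\sD$ algebraic), which the theorem does not assume. Your abstract octahedral sketch is on the right track, and the obstacle you flag is in fact not there: since $\sC$ is silting one has $\Hom(\sC,\Sigma\sC)=0$, hence $\sC*\sC=\sC$, so when a filtrand of $\Sigma A$ and one of $B$ land in the same degree their extension already lies in that shift of $\sC$ and no re-truncation is needed. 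What is missing is the honest iterated-octahedral construction that actually produces the desired Postnikov filtration of $C_f$ in $\sD$.

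The paper does not aim for a global bound. It first treats the case where $A$ is $1$-term, say $A=\Sigma^m C$: explicit octahedral manipulations, together with \cite[Lemmas~7.1 and 7.2]{JP}, produce a Postnikov filtration of any cone $Z$ whose filtrands are precisely those of $B$ with a single extra copy of $C$ inserted at degree $m{+}1$, and $\sC$-discreteness then bounds the set of such $Z$. It then inducts on the number of terms of $A$: writing $A'\to A\to A''$ with $A''$ one-term, a further octahedron expresses $Z$ as a cone of $A''$ into one of the (inductively) finitely many cones of $A'\to B$, and Step~1 finishes. This two-step argument incidentally establishes your coordinate-wise bound with equality at each one-term stage, so once the abstract octahedral bookkeeping is carried out your route and the paper's coincide; but as written, your proof relies on a model you are not entitled to.
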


\begin{proof}
$(\Longrightarrow)$ The proof is essentially the same as the proof of \hyref{Theorem}{thm:discrete-characterisations}(i), where we instead write for a function $v \colon \IZ \to K_0(\sD)$,
\[
\sD_v \coloneqq \{ A \in \sD \mid A \text{ admits a filtration \eqref{eq:Postnikov} with } \supp(v) =\{i_1, \dots ,i_n \} \text{ and } [C_j] = v(i_j)% \, \forall i_j \in \IZ 
\}.
\]
The only part where the proof differs is the base step of the induction, i.e.\ $\length{v} = 0$. Again, without loss of generality we may assume $v(0) \neq 0$. Since $\sC = \add{C}$, where $C = C_1 \oplus \cdots \oplus C_n$ say, is a silting object, each object $C' \in \sC$ decomposes uniquely as $C' = C_1^{m_1} \oplus C_2^{m_2} \oplus \cdots \oplus C_n^{m_n}$, whence $[C'] = m_1 [C_1] + \cdots + m_n [C_n]$. Therefore, the class of $[C'] \in K_0^{\mathrm{split}}(\sC)$ is uniquely determined by its Krull--Schmidt decomposition. This says, in particular, that $\sD_v$ is a singleton when $\length{v} = 0$. The remainder of the proof proceeds as in Theorem~\ref{thm:discrete-characterisations}(i), noting that the uniqueness of the decomposition triangle $\tri{A'}{A}{A''}$ is not required for the proof to work.

$(\Longleftarrow)$ Let $A, B \in \sD$. We want to show that the set
  $\sZ \coloneqq \{Z \mid \exists \, A \xrightarrow{f} B \to Z \to \Sigma A\}$
is finite. We proceed in two steps.

\Step{1} \emph{$A$ is $1$-term with respect to $\sC$ and $B$ is $n$-term with respect to $\sC$, for some $n \geq 1$.}

\smallskip

Without loss of generality we may assume that $A = \Sigma^m C$ for some $C \in \sC$ and some $m \in \IZ$ and $B \in \sC^{0,n}$. In particular, this means that $B$ admits a filtration,
\[
\xymatrix{
0 = B_{-1} \ar[r] & B_0 \ar[r] \ar[d]            & B_1 \ar[r] \ar[d]            & \cdots \ar[r] & B_{n-1} \ar[r] \ar[d]  & B_n = B \ar[d] \\
               & C_0 \ar@{-->}[ul] & \Sigma C_1 \ar@{-->}[ul] &               & \Sigma^{n-1} C_{n-1} & \Sigma^n C_n \ar@{-->}[ul]
}
\]
with the $C_i \in \sC$, some of them possibly zero. We consider various possibilities for $m$.

If $m < 0$ then $\Hom(A,B) = 0$ and $\sZ$ is trivially finite.

If $m \geq n$, then we get the following filtration for any $Z \in \sZ$,
\[
\xymatrix{
0 = B_{-1} \ar[r] & B_0 \ar[r] \ar[d]            & B_1 \ar[r] \ar[d]            & \cdots \ar[r] & B_{n-1} \ar[r] \ar[d]  & B_n = B \ar[d] \ar[r] & Z \, , \ar[d] \\
               & C_0 \ar@{-->}[ul] & \Sigma C_1 \ar@{-->}[ul] &               & \Sigma^{n-1} C_{n-1} & \Sigma^n C_n \ar@{-->}[ul] & \Sigma^{m+1} C \ar@{-->}[ul]
}
\]
whence by $\sC$-discreteness there are only finitely many $Z$ admitting a filtration with these filtrands, making $\sZ$ finite.

If $0 \leq m < n$, we consider the diagram coming from the octahedral axiom.
\[
\xymatrix{
                         & A \ar@{=}[r] \ar[d] & \Sigma^m C \ar[d]^-{0}  \\
B_{n-1} \ar[r] \ar@{=}[d] & B \ar[r] \ar[d]     & \Sigma^n C_n \ar[d] \\
B_{n-1} \ar[r]            & Z \ar[r]            & X
}
\]
Thus, $X = \Sigma^n C_n \oplus \Sigma^{m+1} C$. If $m = n-1$ then we get the filtration:
\[
\xymatrix{
0 = B_{-1} \ar[r] & B_0 \ar[r] \ar[d]            & B_1 \ar[r] \ar[d]            & \cdots \ar[r] & B_{n-1} \ar[r] \ar[d]  & Z \, .\ar[d] \\
               & C_0 \ar@{-->}[ul] & \Sigma C_1 \ar@{-->}[ul] &               & \Sigma^{n-1} C_{n-1} & \Sigma^n (C_n \oplus C) \ar@{-->}[ul]
}
\]
Otherwise, using \cite[Lemmas 7.1 and 7.2]{JP} in sequence gives the filtration 
\[
\xymatrix{
0 = B_{-1} \ar[r] & B_0 \ar[r] \ar[d]            &  \cdots \ar[r]  & B'_{m+1} \ar[r] \ar[d]                      & \cdots \ar[r] & B'_{n-1} \ar[r] \ar[d]  & Z \, .\ar[d] \\
                 & C_0 \ar@{-->}[ul]              &                & \Sigma^{m+1} (C_{m+1} \oplus C) \ar@{-->}[ul] &               & \Sigma^{n-1} C_{n-1} & \Sigma^n C_n \ar@{-->}[ul]
}
\]
In either case, $\sC$-discreteness affirms the finiteness of $\sZ$.

\Step{2} \textit{Both $A$ and $B$ are of arbitrary length with respect to $\sC$.}

\smallskip

\noindent
We proceed by induction on the length of $A$ with respect to $\sC$; cf.\ proof of \hyref{Theorem}{thm:general-properties}. Suppose $A$ is $n$-term with respect to $\sC$. Then $A$ admits a decomposition $\tri{A'}{A}{A''}$ in which $A'$ is $(n-1)$-term and $A''$ is $1$-term with respect to $\sC$. Now consider the diagram coming from the octahedral axiom:
\[
\xymatrix{
                    & \Sigma A' \ar@{=}[r] \ar[d] & \Sigma A' \ar[d] \\
Z \ar[r] \ar@{=}[d] & \Sigma A \ar[r] \ar[d]      & \Sigma B \ar[d] \\
Z \ar[r]            & \Sigma A'' \ar[r]           & \Sigma X .
}
\]
By induction, there are finitely many possible $\Sigma X$, whence by Step 1, and the fact that $A''$ is $1$-term with respect to $\sC$, there are finitely many possible $Z$.
\end{proof}

\begin{corollary}
If $\sD$ is discrete with respect to a silting subcategory $\sC$ of $\sD$, then $\sD$ is discrete with respect to any other silting subcategory $\sC'$.
\end{corollary}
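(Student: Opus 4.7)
The plan is to view this statement as an immediate consequence of \hyref{Theorem}{thm:co-discrete}, since that theorem characterises $\sC$-discreteness via a property (cone finiteness) that depends only on the triangulated category $\sD$ and not on any auxiliary choice of co-t-structure. Thus $\sC$-discreteness becomes a bridge-property: once we know $\sD$ is $\sC$-discrete for one silting subcategory admitting an additive generator, we can forget $\sC$ entirely and transfer the conclusion to $\sC'$.

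Concretely, I would proceed in two symmetric steps, implicitly assuming (as in \hyref{Theorem}{thm:co-discrete}) that both $\sC$ and $\sC'$ admit additive generators, i.e.\ $\sD$ has silting objects generating both. First, the hypothesis that $\sD$ is $\sC$-discrete together with the forward implication of \hyref{Theorem}{thm:co-discrete} applied to the pair $(\sD,\sC)$ yields that $\sD$ is cone finite. Second, now knowing that $\sD$ is cone finite and that $\sC'$ is the co-heart of a bounded co-t-structure with additive generator, the reverse implication of \hyref{Theorem}{thm:co-discrete} applied to the pair $(\sD,\sC')$ yields that $\sD$ is discrete with respect to $\sC'$.

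There is essentially no obstacle, as all genuine work has been carried out in the proof of \hyref{Theorem}{thm:co-discrete}; the corollary is a formal consequence. The only point worth flagging is the implicit hypothesis that each silting subcategory under consideration admits an additive generator (i.e.\ arises as $\add C'$ for a silting object $C'$), so that \hyref{Theorem}{thm:co-discrete} is applicable in both directions.
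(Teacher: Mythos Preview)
Your proposal is correct and matches the paper's approach: the corollary is stated immediately after \hyref{Theorem}{thm:co-discrete} with no proof, precisely because it follows formally by passing through cone finiteness as you describe. Your remark about the implicit additive-generator hypothesis is also apt, since the paper's standing assumption in this section is that $\sD$ has a silting object.
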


For the next corollary, we remark that $\Db(\mod{\Lambda})$ has a natural bounded t-structure, with heart $\mod{\Lambda}$, and $\Kb(\proj{\Lambda})$ has a natural co-t-structure, with co-heart $\proj{\Lambda}$. 
However, by \hyref{Proposition}{prop:DDC-heart-independence} and the previous corollary, the actual choices of heart and co-heart do not matter.

\begin{corollary} \label{cor:co-discrete}
Let $\Lambda$ be a finite-dimensional $\kk$-algebra. If $\Db(\mod{\Lambda})$ is discrete with respect to $\mod{\Lambda}$ then  $\Kb(\proj{\Lambda})$ is discrete with respect to $\proj{\Lambda}$.
\end{corollary}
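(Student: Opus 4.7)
The plan is to chain together \hyref{Theorem}{thm:discrete-characterisations}(iii), the subcategory-closure remarks from \hyref{Section}{sec:Vossieck}, and \hyref{Theorem}{thm:co-discrete}. The point is that cone finiteness serves as a bridge between $\mod{\Lambda}$-discreteness of $\Db(\mod{\Lambda})$ and $\proj{\Lambda}$-discreteness of $\Kb(\proj{\Lambda})$.

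First, I would observe that $\sH \coloneqq \mod{\Lambda}$ is modular: since $\Lambda$ is finite-dimensional, $\mod{\Lambda}$ has finite-dimensional Hom-spaces and dimension vectors, and subobjects of a finite-dimensional module have bounded dimension vectors, hence only finitely many classes in $K_0(\sH) = \IZ^N$. By hypothesis, $\Db(\mod{\Lambda})$ is discrete with respect to $\sH$. Applying \hyref{Theorem}{thm:discrete-characterisations}(iii) then yields that $\Db(\mod{\Lambda})$ is cone finite.

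Next, I would pass this to the perfect subcategory. The natural functor $\Kb(\proj{\Lambda}) \to \Db(\mod{\Lambda})$ is a fully faithful triangulated embedding, so triangles (and in particular cones of morphisms) computed in $\Kb(\proj{\Lambda})$ agree with those computed in $\Db(\mod{\Lambda})$. As noted in \hyref{Section}{sec:Vossieck}, cone finiteness is inherited by triangulated subcategories, so $\Kb(\proj{\Lambda})$ is cone finite.

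Finally, the category $\sC \coloneqq \proj{\Lambda}$ is the co-heart of the standard bounded co-t-structure on $\Kb(\proj{\Lambda})$, and $\sC = \add \Lambda$ has the additive generator $\Lambda$, so the hypotheses of \hyref{Theorem}{thm:co-discrete} are met. That theorem (the implication $\Leftarrow$ being replaced here by $\Rightarrow$: cone finite $\implies$ $\sC$-discrete) gives the desired conclusion. The only potential obstacle is verifying the compatibility of cones under $\Kb(\proj{\Lambda}) \hookrightarrow \Db(\mod{\Lambda})$, but this is immediate from the fact that the embedding is triangulated and fully faithful; no delicate calculation is required.
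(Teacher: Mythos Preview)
Your argument is correct and follows essentially the same route as the paper's proof: modularity of $\mod{\Lambda}$ plus \hyref{Theorem}{thm:discrete-characterisations}(iii) gives cone finiteness of $\Db(\mod{\Lambda})$, this passes to the full triangulated subcategory $\Kb(\proj{\Lambda})$, and then \hyref{Theorem}{thm:co-discrete} converts cone finiteness into $\proj{\Lambda}$-discreteness. The only slip is a citation: the observation that cone finiteness is inherited by triangulated subcategories appears in Section~1 (just before \hyref{Theorem}{thm:general-properties}), not in \hyref{Section}{sec:Vossieck}.
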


\begin{proof}
$\Db(\mod{\Lambda})$ is cone finite by \hyref{Theorem}{thm:discrete-characterisations} as $\mod{\Lambda}$ is modular. Hence the full subcategory of $\Kb(\proj{\Lambda}) \subseteq \Db(\mod{\Lambda})$ is also cone finite. Whence \hyref{Theorem}{thm:co-discrete} implies $\Kb(\proj{\Lambda})$ is discrete with respect to $\proj{\Lambda}$.
\end{proof}

\begin{remark}
Vossieck's main result \cite[\S 2, Theorem]{Vossieck} asserts that $\Db(\mod{\Lambda})$ is discrete if and only if $\Kb(\proj{\Lambda})$ is discrete (taking homology with respect to $\mod{\Lambda}$). 

However, in general, there is no intrinsic definition of discreteness in $\Kb(\proj{\Lambda})$. For example, $\Db(\mod{\kk[x]/(x^2)})$ is discrete with respect to $\mod{\Lambda}$, but $\Kb(\proj{\kk[x]/(x^2)})$ has no bounded t-structure \cite{HJY} so that the discreteness notion of \cite{Vossieck} does not apply. Nevertheless, $\Kb(\proj{\kk[x]/(x^2)}) = \Kb(\proj{\Lambda(1,1,0)})$ is discrete with respect to $\proj{\Lambda}$ by \hyref{Corollary}{cor:co-discrete}, and more generally, the same holds for all $\Kb(\proj{\Lambda(n,n,m)})$.
\end{remark}

\section{Examples} \label{sec:examples}
\noindent
In the following table, we present some triangulated categories exhibiting interesting behaviour with regards to the various smallness notions studied in this article. For the convenience of the reader, we briefly summarise these notions:
\begin{description}[font=\itshape]
\item[$\sH$-discrete]
     There is a bounded heart $\sH$ such that for any $v\colon \IZ\to K_0(\sH)$,
     the set of objects $D\in\sD$ with $[H^i(D)]=v(i)$ for all $i$ is finite.
     In all example classes below, if this property holds for one bounded heart,
     it holds for all.
\item[$\sC$-discrete]
     There is a silting subcategory $\sC$ such that for any $v\colon \IZ \to K_0^{\mathrm{split}}(\sC)$,
     the set of objects admitting a Postnikov tower having filtrands $\Sigma^{i_j}C_j$
     with $i_1<\cdots<i_n$ and $C_i\in\sC$ is finite. 
\item[hom bound]
     There is a universal bound on Hom dimensions among indecomposable objects;
     the subscript indicates the maximal bound occurring in the family.
\item[cone finite]
     Any two objects admit only finitely many cones, up to isomorphism.
\item[finite hearts]
     Hearts of bounded t-structures have finitely many indecomposables.
\item[discrete hearts]
     Any object $H\in\sH$ of any bounded heart is determined up to finite ambiguity
     by $[H]\in K_0(\sH)$.
\item[countable]
     The category has only countably many objects, up to isomorphism.
\end{description}
Several of these properties make no sense for triangulated categories without bounded (co-)t-structures. This is indicated by $-$ in the table. The examples assume that $\kk$ is an uncountable field.

\begin{center} \label{tab:examples}
\resizebox{\textwidth}{!}{
\begin{tabular}{ >{\small} l <{\normalsize} *{11}{p{2.25em}} } \toprule
     & & & & & & & & & \\[-2.7ex]

     & \mcc{$\kk\Gamma_{\!\scalebox{0.6}{\text{ADE}}}$} & \mcc{$\kk A_\infty$}
     & \mcc{DDC} & \mcc{$\text{DDC}^c$}
     & \mcc{$\orbit$}
     & \mcc{$\sphc{1}$} & \mcc{$\sphc{>1}$} & \mcc{$\sphc{<1}$} & \mcc{$\sphc{1,n}$}
     & \mcc{$\IQ\tilde A_1$} & \mcc{$\IFq\tilde A_1$} \\ \midrule
%
% examples:            ADE       A_infty   DDC       DDC^c     orbit     T_1    T_2       T_0      T_1,n     Q      IF_q
%
$\sH$-discrete    & \yes    & \yes    & \yes    & \na     & \na     & \yes & \yes    & \na     & \na     & \no  & \yes \\
$\sC$-discrete & \yes    & \yes    & \yes    & \yes    & \na     & \na  & \na     & \yes    & \na     & \no  & \yes \\
hom bounded          & \yeb{6} & \yeb{1} & \yeb{2} & \yeb{2} & \yeb{6} & \no  & \yeb{1} & \yeb{2} & \yeb{\mathrlap{\floor{n/2}}}
                                                                                                           & \no  & \no  \\
cone finite          & \yes    & \yes    & \yes    & \yes    & \yes    & \yes & \yes    & \yes    & \yes    & \no  & \yes \\
finite hearts        & \yes    & \no     & \yes    & \na     & \na     & \no  & \yes    & \na     & \na     & \no  & \no  \\
discrete hearts      & \yes    & \yes    & \yes    & \na     & \na     & \yes & \yes    & \na     & \na     & \no  & \yes \\
countable objects    & \yes    & \yes    & \yes    & \yes    & \yes    & \yes & \yes    & \yes    & \yes    & \yes & \yes \\
\bottomrule
\end{tabular}
}
\end{center}

\bigskip

We proceed to explain the example classes.

\begin{description}
\item[$\kk\Gamma_{\!\scalebox{0.6}{\text{ADE}}}$, $\kk A_\infty$ --- quiver algebras]
By listing finite-dimensional algebras, we mean their bounded derived categories.

$\Gamma_{\!\text{ADE}}$ stands for an ADE quiver, so that the corresponding algebra is hereditary and representation-finite. The maximal hom bound of $6$ is achieved in type $E_8$.

$A_\infty$ stands for the (one-sided) infinite, zigzag oriented quiver of type $A$. This example is interesting because $\Db(\kk A_\infty)$ is hom bounded, but has infinite hearts; note it does not have a classical generator.
\item[DDC, $\text{DDC}^c$ --- derived-discrete algebras]
DDC is a shorthand for $\Db(\LLambda)$, the derived-discrete category for the algebra with $r$ consecutive relations in an $n$-cycle and a tail of length $m$; see \hyref{Section}{sec:DDA}. We assume $r<n$, so that AR triangles (i.e.\ a Serre functor) exist. Note that $\Db(\Lambda(n,n,m))$ has no bounded co-t-structures.

$\text{DDC}^c$ stands for $\Kb(\proj{\Lambda(n,n,m)})$, the bounded homotopy category of projective modules over a derived-discrete algebra of infinite global dimension. This is both the subcategory of perfect complexes of $\Db(\Lambda(n,n,m))$ and the the subcategory of compact objects in $\sD(\Lambda(n,n,m))$. These algebras are gentle, hence Gorenstein, so that $\text{DDC}^c$ has AR triangles \cite{GR}.
\item[$\orbit$ --- cluster categories]
$\orbit = \Db(\kk\Gamma_{\!\text{ADE}})/\Sigma^{-1}\tau$ stands for the cluster category of type ADE, where $\tau$ is the Auslander--Reiten translation. It is triangulated by \cite{Keller-orbit} and has finitely many indecomposables.
\item[$\sphc{1}, \sphc{>1}, \sphc{<1}$ --- spherical generators]
For $w\in\IZ$, let $\sphc{w}$ be the triangulated category generated by a $w$-spherical object, i.e.\ an object whose derived endomorphism algebra is $\kk\oplus\Sigma^{-w}\kk$. Note that $\sphc{1}$ is the bounded derived category of the hereditary standard homogeneous tube.
Hom bounds for $\sphc{<0}$ are 1, and for $\sphc{0}$ it is 2.

The categories $\sphc{<1}$ have no bounded t-structures \cite{HJY}, making it pointless to ask for $\sH$-discreteness or finite hearts. Likewise, $\sphc{\geq1}$ has no bounded co-t-structures.
\item[$\sphc{1,n}$ --- truncated tubes]
For $n>1$, we let $\sphc{1,n} = \stmod{\kk[x]/(x^n)}$ be the stable module category, e.g.\ $\kk[x]/(x^2) = \Lambda(1,1,0)$.
The AR quiver of $\sphc{1,n}$ is the following truncated homogeneous tube:
\[
\xymatrix{
X_1 \ar@/^/[r] & X_2 \ar@/^/[r] \ar@/^/[l] & X_3 \ar@/^/[r] \ar@/^/[l] & \cdots \ar@/^/[r] \ar@/^/[l] & X_{n-2} \ar@/^/[r] \ar@/^/[l] & X_{n-1} .\ar@/^/[l]
}
\]
The unique projective module $X_n$ does not occur because we have taken the stable category. The algebra is self\-injective, hence $\sphc{1,n}$ is triangulated. As $\sphc{1,n}$ has only $n-1$ many indecomposable objects and is Krull--Schmidt, it is cone finite. Moreover, from $\dim \underline{\Hom}(X_i,X_i) = \min \{i, n-i\}$, we see that arbitrary hom bounds can be attained. Note that $\sphc{1,n}$ has no bounded t-structures. % not even non-trivial torsion pairs
\item[Small fields]
$\tilde A_1$ is the Kronecker quiver, and the last two columns denote $\Db(\IQ\tilde A_1)$ and $\Db(\IF_q\tilde A_1)$, respectively. Instead of $\IQ$, any infinite countable field works. By \hyref{Remark}{rem:fields}, we could replace $\tilde A_1$ by any finite-dimensional algebra. We chose the Kronecker quiver because it is manifestly non-discrete for uncountable fields.
\end{description}

\addtocontents{toc}{\protect{\setcounter{tocdepth}{-1}}}

\enlargethispage{3ex}
\bigskip
\noindent
\resizebox{\textwidth}{!}{{Email: \texttt{nathan.broomhead@plymouth.ac.uk, d.pauksztello@lancaster.ac.uk, dploog@math.fu-berlin.de}}}

\smallskip
\noindent
\resizebox{\textwidth}{!}{%
\begin{tabular}{@{}l@{}}
Nathan Broomhead, Mathematical Sciences, Plymouth University, Drake Circus, Plymouth, PL4 8AA, United Kingdom \\
David Pauksztello, Department of Mathematics and Statistics, Lancaster University, Lancaster, LA1 4YF, United Kingdom \\
David Ploog, Arnimallee 3, Mathematisches Institut, Freie Universit\"at Berlin, 14195 Berlin, Germany
\end{tabular}
}

\end{document}